\newtheorem{thm}{Theorem}
\newtheorem{lem}{Lemma}
\newtheorem{cor}{Corollary}
\newtheorem{Def}{Definition}
\newtheorem{Ex}{Example}
\newtheorem{rmk}{Remark}
\begin{document}
\title{The W-Polynomial and the Mahler Measure of the Kauffman Bracket }
\author{ Robert G. Todd\\ University of Nebraska at Omaha} 
\maketitle

\begin{abstract}The W-polynomial is applied in two ways to questions involving the Kauffman bracket of some families of links. First we find a geometric property of a link diagram, which is less than or equal to the twist number, that bounds the Mahler measure of the Kauffman bracket. Second we find a general form for the Kauffman bracket of a link found by surgering in a single rational tangle along n unlinked components, all in a particular annulus. We then give a condition under which the Mahler measure of the Kauffman bracket of such families diverges. We give examples of the condition in action.
\end{abstract}
\section{Introduction}

In \cite{Twisting}, Champanerkar and Kofman explore the Mahler measure of the Jones polynomials of links in a family obtained by $-\frac{1}{m}$ surgery on a diagram, concluding that the Mahler measure converges. In \cite{cyc} they are able to find a bound on the Mahler measure of the Jones polynomials of families of links all with the same twist number. Each of these results has a geometric analogue. First, Thurston's hyperbolic Dehn surgery theorem, the volume of the complement of a hyperbolic knot converges under such Dehn surgeries. Second, Lackenby's result that the twist number is an upper bound on the volume of the complement of a hyperbolic knot. 

Here we wish to investigate some issues regarding Mahler measure with these ideas in mind. Consider the Kauffman bracket of a link as defined by
\newline

1. The Kauffman bracket of n disjoint simple closed curves is $d^{n-1}$ where $d=-A^{-2}-A^{2}$
\newline

2. $<D>=A<D_{+}>+A^{-1}<D_{-}>$, where  $D_{+}$ and $D_{-}$ are the diagrams achieved by a positive and negative smoothing of a crossing v in diagram D, as in Figure \ref{fig: smoothing}

Let $W(D)$ be the writhe of the diagram D. Then the Jones  polynomial of the link L with diagram D  is 
$$V_{L}(t):=(-A^{3})^{-w(D)}<D>|_{A=t^{-\frac{1}{4}}}$$

\begin{figure}[h]
\vspace*{.5in}

\hspace*{-.5in}\includegraphics{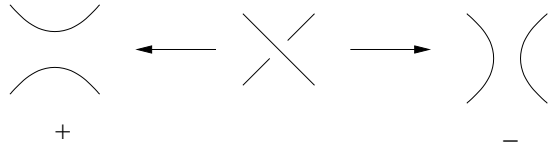}

\caption{Positive and Negative smoothing of a crossing} \label{fig: smoothing}
\end{figure}
	
	In sections 2 and 3 we introduce the W-polynomial and its relation to the Kauffman bracket. In section 4 we review the construction of the twist polynomial from \cite{cyc} and show its relation to the W-polynomial. There, we prove the first main theorem, Theorem \ref{main_thm_1}. In section 5 we introduce the Mahler measure and shown that Theorem \ref{main_thm_1} allows us to sharpen the current geometric upper bound on the Mahler measure of the Kauffman bracket of a link. In section 6 we show how to use the W-polynomial to find the form of  the Kauffman bracket for some infinite families of links. We then give a criterion under which the Mahler measure of the Kauffman brackets of links in these families diverge. In section 8 we apply the techniques from section 7 to some specific examples. Section 9 is a brief discussion of hyperbolic volume and its relation to the previous examples. We end with some directions for future work.
	
\section{The W-Polynomial}
 
 Let G be a multigraph with vertex set V(G) and edge set E(G). Each edge can be thought of as an unordered pair of vertices. The vertices of an edge refer to the elements of this unordered pair. Multi-graphs allow there to be more that one edge between two vertices and edges that connect a vertex to itself, called a loop. For all that follows we will only be concerned with graphs that come with an embedding into the plan, we will also refer to a multigraph as a graph for simplicity. 
 
 A coloring of G is a function c from the set of edges into C(G), an arbitrary set of colors. In \cite{tutte} Bollabas and Riordan defined a Tutte type polynomial for colored graphs. Here we consider a variant of this polynomial, called the W-polynomial as defined in \cite{mainpap}. 
 
  Let $G=(V,E)$ be a graph with edge $e \in E$. Denote by $G-e$ the graph whose vertex set is the same as G and whose edge set is $E-\{e\}$. Denote by $G/e$ the graph whose vertex set is found by identifying the vertices of e and whose edges set is $E-\{e\}$. These operations will be referred to as deletion and contraction respectively. The W-polynomial of G ,$W(G)(t,z_{1},z_{2})$ is a polynomial defined by the following recursive definition.
 
 1. $E_{n}$ is a graph that consists of n vertices and no edges. 
 
  $$W(E_{n})=t^{n-1}$$

 2. An edge is a bridge if deleting it increases the number of connected components of the graph. For a bridge e with $c(e)=\lambda$,
 
 $$W(G)=(x_{\lambda}+z_{1}y_{\lambda})W(G/e).$$

If e is a loop then

$$W(G)=(x_{\lambda}z_{2}+y_{\lambda})W(G-e).$$

If e is neither a bridge nor a loop

$$W(G)=x_{\lambda}W(G/e)+y_{\lambda}W(G-e).$$ 

Also, if the graph G is a disjoint union if the graphs $G_{1}$ and $G_{2}$ then 

$$W(G)=tW(G_{1})W(G_{2})$$

 Let G=(V,E) be a connected  planar graph. Give G an arbitrary order on its edges. Let F be a  spanning subtree of the graph G. For any edge in F, F-\{e\} has two connected components. As such it partitions the vertices of G into two groups. Let $Cut_{G}(F,e)$ be the collection of edges of G whose vertices are in different groups. Call an edge in F internally active with respect to F if it is the first edge in $Cut_{G}(F,e)$ according to the order on the edges of G. Call it internally inactive with respect to F otherwise. Consider an edge e not in F. Adding e to the edge set of F creates a unique cycle. Let $Cyc_{G}(F,e)$ consist of the edges in this cycle. Call e externally active with respect to F if it is the first edge in $Cyc_{G}(F,e)$ according to the order of the edges of G. Call it externally inactive with respect to F otherwise. 
 
 With these definitions we have a spanning tree expansion for the W-polynomial. Let $k(G)$ be the number of connected components in G. Then 
\begin{equation}
W(G)=t^{k(G)-1} \sum_{F} \{\prod_{IA} (x_{c(e)}+d* y_{c(e)})\}\{\prod_{EA} ( d* x_{c(e)}+y_{c(e)}))\}\{\prod_{II} x_{c(e)}\}\{\prod_{EI} y_{c(e)}\}
\end{equation}

where one sums over all spanning subtrees of G. 

Bollobas and Riordan in \cite{tutte} show that the W-polynomial is the most general graph polynomial with a spanning tree expansion. That is, any other graph polynomial with a spanning tree expansion can be realized as an evaluation of the W-polynomial.

However, there is yet  another formulation of the W-polynomial. Define the cyclomatic number of a graph G as $n(G)=|E(G)|-|V(g)|+k(G)$. Let $S \subset E(G)$, be a subset of the edges of G. Consider S as representing the spanning subgraph of G whose edge set is S. The following formulation can be found in \cite{tutte}.
 
 $$W(G)(t,z_{1},z_{2})=t^{k(G)-1}\sum_{S \subset E(G)}\{\prod_{e \in S}x_{c(e)}\} \{\prod_{e \notin S}y_{c(e)}\}z_{1}^{k(S)-k(G)}z_{2}^{n(S)}
 $$
 
We will make use of both the spanning tree expansion of the W-polynomial and the formulation just given.

\section{Graphs Associated To Knot Diagrams}

Let D be a link diagram. To this diagram we associate a signed graph. Color the plane in a checker board manner leaving the unbounded region unshaded. The vertex set of our graph will correspond to the shaded regions of the plan and the edge set will correspond to a crossing of the diagram as in Figure (\ref{fig: sgngrh}). Having constructed our graph, color the edges with a + or - according to the rule illustrated in Figure (\ref{fig: sgngrh}).

\begin{figure}[h]
\begin{center}
\includegraphics{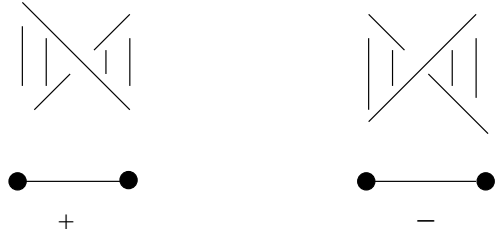}
\end{center}
\caption{Positive and Negative Edge} \label{fig: sgngrh}
\end{figure}

 Let G be a signed graph.  A \textbf{chain} in a graph is a path of edges such that each vertex interior to the path has degree 2 and the extreme vertices have degree greater than 2 or equal to 1. 
 Similarly a \textbf{sheaf} is  a collection of edges connecting the same two vertices. Say that a length of a chain or sheaf is the sum of the signs of its edges. 

\begin{figure}[h]
\begin{center}
\includegraphics{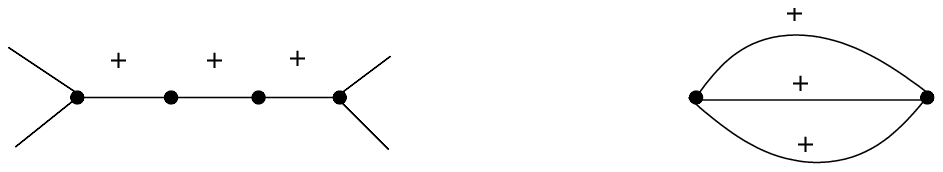}
\end{center}
\caption{A chain and sheaf of length 3} \label{fig: chain_sheaf}
\end{figure}

Let K be a link with diagram D. Two crossings in D are called twist equivalent if there is a circle in the plane of projection that crosses D at only the two crossings. The number of such equivalence classes is called the twist number of D. Furthermore we can find a diagram of K such that any two crossings that are twist equivalent are connected by a chain of bi-gons by performing a sequence of flypes if necessary.

Let D be that diagram and require that the diagram is reduced with respect to Reidemeister moves I and II.  Say that G is the signed graph associated to this diagram D. Notice that each edge in G is part of a chain, a sheaf, or represents an edge which corresponds to a crossing that is the only element in its equivalence class; such an edge must have vertices of degree greater than 2, and it may be considered a chain or a sheaf of length $\pm1$.  Notice also that there are no pendent edges and that all edges in any chain or sheaf have the same sign, lest the diagram not be reduced with respect to Reidemeister moves I and II. 

To this signed graph G we wish to associate a colored graph $\hat{G}$ that can serve as a template for all links with diagrams found by increasing the number of crossings in any twist equivalence class. Wherever there is a chain in G delete all edges and interior vertices and replace it with a single edge colored with $c$ . Similarly for each sheaf delete all edges and replace them with a single edge colored with $s$ .  By construction $\hat{G}$ has no pendent edges that are labeled with $c$ (though there may be pendent edges labeled with s), and the number of edges in $\hat{G}$ corresponds to the twist number of D. 

It will also be helpful to consider the process in reverse. Let G be a graph that has edges colored by s or c. For $t  \in \mathbb{Z}^{k}$ where k is the number of edges in G let  $\hat{G}_{t}$ be found by re-coloring the $i^{th}$ edge of G with $s_{t_{i}}$ if the original color was s and by $c_{t_{i}}$ if it was originally colored with c. To   $\hat{G}_{t}$ we can associate a signed graph by replacing an edge colored with $c_{t_{i}}$ by a chain of length $t_{i}$ and edge colored with $s_{t_{i}}$ with a sheaf of length $t_{i}$. Then as described above this signed graph corresponds to a link diagram. Call this link diagram $D_{t}$.

\begin{thm}[\cite{mainpap}] 
Let $G$ be a colored graph with color set $\{s,c\}$ and edges $\{e_{1},...e_{|E|}\}$. For any $t\in \mathbb{Z}^{|E|}$ let $D_{t}$ be the link diagram that corresponds to graph $\hat{G}_{t}$ as described above. 
\newline

If the color of edge $e_{i}$ in $\hat{G}_{t}$ is $c_{t_{i}}$ let
 
 $$x_{c_{t_{i}}}=A^{t_{i}}$$
 $$y_{c_{t_{i}}}=\frac{(-A^{-3})^{t_{i}}-A^{t_{i}}}{-A^{-2}-A^{2}}$$
 \newline
 If the color of edge $e_{i}$ in $\hat{G}_{t}$  is $s_{t_{i}}$ let
 $$x_{s_{t_{i}}}=\frac{(-A^{3})^{t_{i}}-A^{-t_{i}}}{-A^{-2}-A^{2}}$$
  $$y_{s_{t_{i}}}=A^{-t_{i}}$$
  
  $<D_{t}>=W(\hat{G}_{t})(d,d,d)$, where $d=-A^{-2}-A^{2}$.
  \label{w2kb}
  \end{thm}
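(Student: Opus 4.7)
The strategy is to match the Kauffman state-sum expansion of $\langle D_t\rangle$ against the spanning-subgraph form of $W(\hat G_t)(d,d,d)$, one edge of $\hat G$ at a time. Each edge $e_i$ of $\hat G$ encodes an entire twist region of length $|t_i|$ of the diagram $D_t$, so the heart of the argument is a local computation showing that summing Kauffman's state sum over just the $2^{|t_i|}$ states internal to the $i$th twist region produces exactly the prescribed weights $(x_{c_{t_i}}, y_{c_{t_i}})$ or $(x_{s_{t_i}}, y_{s_{t_i}})$.

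I would first carry out this local computation. Fix an edge $e_i$ of $\hat G$ of color $c$ (the chain case), and view the associated twist region of $D_t$ as a $2$-tangle. As an element of the Kauffman bracket skein module of the disk with four marked boundary points, its bracket expands uniquely as a linear combination $\alpha\, T_\parallel + \beta\, T_\perp$ of the two elementary smoothings. Applying $\langle D\rangle = A\langle D_+\rangle + A^{-1}\langle D_-\rangle$ crossing by crossing, together with the rule $\langle D\sqcup O\rangle = d\langle D\rangle$, a short induction on $|t_i|$ gives closed-form expressions for $\alpha$ and $\beta$; these are exactly $x_{c_{t_i}} = A^{t_i}$ and $y_{c_{t_i}} = \big((-A^{-3})^{t_i} - A^{t_i}\big)/d$ once one identifies $T_\parallel$ with the ``contract $e_i$'' smoothing and $T_\perp$ with the ``delete $e_i$'' smoothing. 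The sheaf case is the same induction with the roles of $T_\parallel$ and $T_\perp$ exchanged, which is precisely what swapping the checkerboard shadings does, and this produces the asymmetric $(x_{s_{t_i}}, y_{s_{t_i}})$ pair.

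Next I would globalize. Applying the local expansion at every edge of $\hat G$ in turn rewrites $\langle D_t\rangle$ as a sum over subsets $S\subseteq E(\hat G)$: for each $e_i\in S$ the $x$-coefficient is used and for each $e_i\notin S$ the $y$-coefficient is used. Once every twist region is resolved, the remaining state is a disjoint union of simple closed curves in the plane, and Kauffman's first rule contributes a factor $d^{c(S)-1}$, where $c(S)$ is the loop count. A standard Tait-graph computation identifies $c(S) = k(S) + n(S)$ (number of components plus cyclomatic number of the spanning subgraph $S$ of $\hat G$). Comparing with the subset-sum form of $W(\hat G_t)$ specialized at $t=z_1=z_2=d$ gives $\sum_S \prod_{e\in S} x_{c(e)}\prod_{e\notin S} y_{c(e)}\, d^{k(S)+n(S)-1}$, matching the Kauffman expansion term for term.

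The main obstacle is the sign and orientation bookkeeping in the local step: one has to check that the ``contract'' smoothing of the Tait graph is consistently the one carrying the $x$-coefficient in the W-polynomial recursion in all four cases (chain versus sheaf, positive versus negative edge sign). Once this dictionary is pinned down, the two sums are manifestly equal and the theorem follows.
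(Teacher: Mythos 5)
The paper does not actually prove this statement: Theorem \ref{w2kb} is imported verbatim from Jin and Zhang \cite{mainpap}, so there is no in-paper proof to compare against. Judged on its own terms, your outline is sound and is essentially the standard argument (and, as far as one can tell, the one in \cite{mainpap}): resolve each twist region locally in the skein module of the four-punctured disk to produce the two tangle coefficients, identify the parallel/perpendicular smoothings with contraction/deletion of the corresponding edge of the Tait graph, and then match the resulting sum over subsets $S\subseteq E(\hat G_t)$ against the subset-sum form $W(G)(d,d,d)=\sum_S\prod_{e\in S}x_{c(e)}\prod_{e\notin S}y_{c(e)}\,d^{k(S)+n(S)-1}$. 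Your loop count $c(S)=k(S)+n(S)=|S|-|V|+2k(S)$ is exactly the Euler-characteristic computation the author carries out independently in Lemma 1, and your local coefficients check out (e.g.\ for a chain of length $2$ the state sum gives $x=A^{2}$ and $y=2+A^{-2}d=1-A^{-4}=\bigl((-A^{-3})^{2}-A^{2}\bigr)/d$, as required). The one place you are right to flag danger is the chain/sheaf dictionary: a sheaf is the planar dual of a chain and the crossing-sign convention flips with it (this is the content of the paper's Remark \ref{sgnchc}), which is why $(x_{s},y_{s})$ is obtained from $(x_{c},y_{c})$ by swapping the two entries \emph{and} replacing $A$ by $A^{-1}$, not by a swap alone; your proposal acknowledges but does not execute this verification, so that is the only step you would still need to write out in full.
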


 \section{ The W-polynomial and the Twist polynomial} \label{wPolyTwistpoly}
 
 In \cite{cyc} Champanerkar and Kofman defined the twist polynomial associated to a wiring diagram. In what follows we review the definition of the twist polynomial as they set forth and show that it is an evaluation of the W-polynomial.
 
A wiring diagram is a 4-valent graph with two types of vertices; horizontal and vertical. Each vertex is referred to as a twist site. Let $\tilde{L}$ be a wiring diagram with k twist sites. To each vector $w \in \mathbb{Z}^{k}$ we can construct from $\tilde{L}$ a link diagram $L_{w}$ in the following manner. Replace the $i^{th}$ twist site with $|w_{i}|$ crossings where the type of crossing and the orientation of the twist class depends on the type of vertex 
 replaced. Figures (\ref{fig: wire} a) and (\ref{fig: wire} b) below indicate how to replace a vertical, respectively horizontal, twist site with a twist equivalence class of crossings when $w_{i}=n>0$. If $w_{i}=n<0$ change all under crossings to over crossings and vice versa. 
\begin{figure}[h]
\begin{center}
\includegraphics{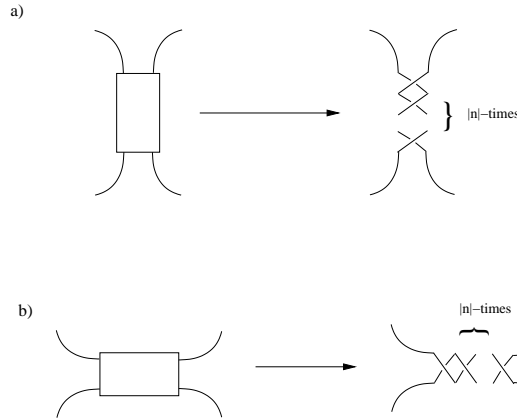}
\end{center}
\caption{(a) Filling a vertical vertex. (b) Filling a horizontal vertex} \label{fig: wire}
\end{figure}

Consider now $\sigma \in \{0,1\}^{k}$. To each $\sigma$ we associate a collection of closed components $L_{\sigma}$, called a state of the wiring diagram, constructed from $\tilde{L}$  by filling each vertex as indicated in Figure \ref{fig: clsdcmp}. In Figure (\ref{fig: clsdcmp} a) we see the corresponding fillings for the vertical vertex $v_{i}$ when $\sigma_{i}=0$  and when $\sigma_{i}=1$ respectively. In Figure (\ref{fig: clsdcmp} b) we see the same thing when vertex $v_{i}$ is horizontal. 

\begin{figure}[h]
\begin{center}
\includegraphics{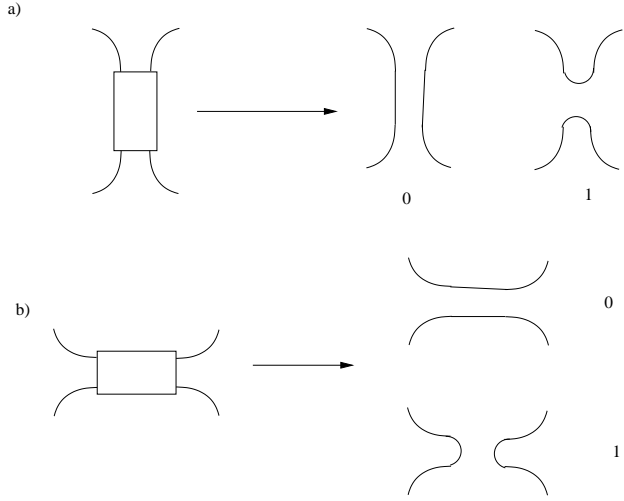}
\end{center}
\caption{Finding $L_{\sigma}$ for (a) a vertical vertex and (b) a horizontal vertex} \label{fig: clsdcmp}
\end{figure}

\begin{Def}[\cite{cyc}] 
  Let $d=-A^{-2}-A^{2}$ . For $\tilde{L}$ a wiring diagram with k open twist sites, the \textit{twist-bracket} of $\tilde{L}$ is defined by
  $$ P_{\tilde{L}}(A,x_{1},..,x_{k})=\sum_{\sigma \in \{0,1\}^{k}}(\prod_{i=1}^{k} (x_{i}-1)^{\sigma_{i}}d^{1-\sigma_{i}})<L_{\sigma}>$$
\end{Def}

\begin{thm} (\cite{cyc})
Let  $\tilde{L}$  be a wiring diagram with k open twist sites, $n \in \mathbb{Z}^{k}$, and let $\sigma(n)=\sum_{i=1..k} n_{i}$ then

$$d^{k}<L_{n}>= A^{\sigma(n)} P(A,(-A^{-4})^{n_{1}},...,(-A^{-4})^{n_{k}})$$
\end{thm}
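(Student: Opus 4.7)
The plan is to evaluate $\langle L_n \rangle$ by applying the Kauffman bracket skein relation locally at each of the $k$ twist sites, and then to compare the resulting $2^k$-term expansion with the definition of the twist-bracket $P_{\tilde{L}}$. Each term in the expansion will be indexed by a smoothing vector $\sigma \in \{0,1\}^k$ and will produce exactly the state link $L_\sigma$ that appears in $P_{\tilde{L}}$, so the proof reduces to matching scalar coefficients index by index.

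The main local step is as follows. Treating a small neighborhood of the $i$th vertex as a $2$-tangle, I would repeatedly apply the skein relation $\langle D \rangle = A\langle D_+\rangle + A^{-1}\langle D_-\rangle$ together with the circle-removal relation (a closed loop contributes a factor of $d$), and prove by induction on $|n_i|$ that the tangle obtained by inserting $n_i$ crossings at site $i$ expands as
\begin{equation*}
A^{n_i}\cdot T_0 \;+\; \frac{(-A^{-3})^{n_i} - A^{n_i}}{d}\cdot T_1,
\end{equation*}
where $T_0$ denotes the smoothing corresponding to $\sigma_i = 0$ and $T_1$ the smoothing corresponding to $\sigma_i = 1$ from Figure \ref{fig: clsdcmp}. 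For $|n_i|=1$ this is simply the skein relation; the inductive step produces a two-term linear recurrence for the coefficient of $T_1$ whose closed form is the one claimed. The cases $n_i \le 0$ and the alternate (horizontal versus vertical) twist-site type follow by symmetric calculations with the appropriate sign and orientation swaps.

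Applying this identity simultaneously at all $k$ twist sites and distributing the product yields
\begin{equation*}
\langle L_n \rangle \;=\; \sum_{\sigma \in \{0,1\}^k} \left(\prod_{i=1}^k c_i(\sigma_i)\right) \langle L_\sigma \rangle,
\end{equation*}
with $c_i(0)=A^{n_i}$ and $c_i(1)=((-A^{-3})^{n_i}-A^{n_i})/d$. Multiplying through by $d^k$ absorbs the denominator, giving per-index factors $d\,c_i(0) = A^{n_i}\cdot d$ and $d\,c_i(1) = (-A^{-3})^{n_i} - A^{n_i} = A^{n_i}\bigl((-A^{-4})^{n_i}-1\bigr)$. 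These are exactly $A^{n_i}(x_i-1)^{\sigma_i} d^{1-\sigma_i}$ evaluated at $x_i=(-A^{-4})^{n_i}$, so pulling the factor $\prod_i A^{n_i}=A^{\sigma(n)}$ out of the sum converts $d^k \langle L_n \rangle$ into $A^{\sigma(n)} P_{\tilde{L}}(A,(-A^{-4})^{n_1},\ldots,(-A^{-4})^{n_k})$, as claimed. The main obstacle will be the bookkeeping in the local induction: correctly identifying which of $T_0$ and $T_1$ corresponds to $\sigma_i = 0$ for each vertex type and each sign of $n_i$. Once that is settled, the coefficient match is an immediate algebraic check.
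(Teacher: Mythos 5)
Your proof is correct, but it takes a genuinely different route from the paper. You expand the bracket directly: at each twist site you resolve the $|n_i|$ inserted crossings via the skein relation and prove (by induction on $|n_i|$) the local identity that the twist equals $A^{n_i}T_0+\frac{(-A^{-3})^{n_i}-A^{n_i}}{d}T_1$, then distribute over all $k$ sites to get a state sum over $\sigma\in\{0,1\}^k$ and match coefficients with the definition of $P_{\tilde L}$ after multiplying by $d^k$ (the coefficient check is right: $d\cdot\frac{(-A^{-3})^{n_i}-A^{n_i}}{d}=A^{n_i}\bigl((-A^{-4})^{n_i}-1\bigr)$, and the spot checks at $n_i=\pm1,2$ confirm the closed form). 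This is essentially the original Champanerkar--Kofman argument --- your local expansion is exactly the expression of a twist in $TL_2$ --- which the paper explicitly sets aside: the whole point of its proof is to \emph{re-derive} the theorem from the W-polynomial machinery, by passing to the colored graph $G_W$, invoking Theorem \ref{w2kb} to write $\langle D_t\rangle$ as a subset expansion of $W(\hat G_t)$, and using Lemma 1 to identify $d^{|S|+2k(S)-V-1}$ with $\langle(W_G)_\sigma\rangle$. What your approach buys is self-containedness and transparency: no graph/wiring-diagram dictionary, no Lemma 1, no appeal to the Jin--Zhang evaluation. What the paper's approach buys is the structural statement that the twist-bracket is an evaluation of the W-polynomial, which is what the rest of the paper actually uses. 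The one place your write-up is thin is exactly the place the paper needs Remark \ref{sgnchc}: the bookkeeping of which smoothing is the $0$-filling versus the $1$-filling for vertical versus horizontal sites and for $n_i<0$. You flag this and defer it to ``symmetric calculations,'' which is acceptable but is the only step where a sign error could actually occur, so it deserves to be written out in at least one of the four cases.
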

  
   Let W be a wiring diagram. Just as we can shade a link diagram we can shade a wiring diagram. Give the wiring diagram a checkerboard shading so that the unbounded region is unshaded. Notice that the zero filling and the one filling of a vertex can be found  by the following rule. The zero filling is found by inserting any number of positive crossings according to the vertex type and giving each one a positive smoothing. The one filling is found  by inserting any number of positive crossings, giving any one of them a negative smoothing and then removing all nugatory crossings. We will call a vertex vertical or horizontal depending on how the zero filling interacts with the shading as illustrated below in Figure (\ref{fig:  verthorz}).
\begin{figure}[h]
\begin{center}
\includegraphics{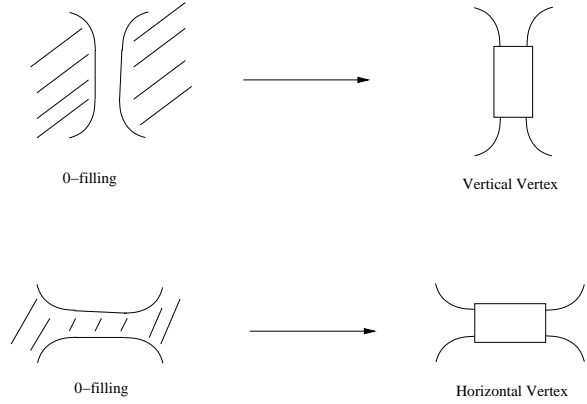}
\end{center}
\caption{Interaction between the shading and $L_{\sigma}$} \label{fig: verthorz}
\end{figure}
  
  With this definition there is an exact correspondence between wiring diagrams and colored graphs with color set $\{c,s\}$. Given a wiring diagram with a checkerboard shading we can construct a colored graph by letting vertices correspond to shaded regions and edges  correspond to vertices in the wiring diagram in the obvious way. Color an edge with an $s$ if its corresponding vertex is vertical and color the edge with a $c$ if its corresponding vertex is horizontal. Let G be a colored graph with color set $\{s,c\}$, E edges and V vertices. From G we construct a wiring diagram. First consider the link diagram $D_{\bar{1}}$, where 
$\bar{1}=(1,1,...,1) \in \mathbb{Z}^{|E|}$; that is we let all sheafs and chains have length 1. From $D_{\bar{1}}$  delete a neighborhood of each crossing, replacing that neighborhood with a vertex. Let the vertex be vertical if its corresponding edge is a sheaf and horizontal if its corresponding edge is a chain. 

We wish to see the twist polynomial of a wiring diagram as the W-polynomial of its corresponding colored graph. We must specify how subgraphs of a graph correspond to particular states of a wiring diagram. 
Let $G_{W}$ and $W_{G}$ be a graph and corresponding wiring diagram. Say that $G_{W}$ has E edges with an arbitrary order,$\{e_{1},\dots,e_{E}\}$. This of course gives a corresponding order on the vertices of the wiring diagram $W_{G}$. 

Consider the following correspondence between subgraphs of G and vectors in $\{0,1\}^{E}$. Let $S \subset E(G)$, and consider it as representing the spanning subgraph of G with edge set S as intended when considering the W-polynomial. If the $i^{th}$ edge e is a sheaf and $e\in S$ let $\sigma_{i}=1$ while if it is not in S, let $\sigma_{i}=0$. Similarly if e is a chain and $e \in S$ let $\sigma_{i}=0$  and if e is not in S let  $\sigma_{i}=1$.  This exhibits a particular one-to-one correspondence between spanning subgraphs of G and states of the wiring diagram.

\begin{rmk} \label{sgnchc}\end{rmk}There is a difference in the sign conventions for sheafs in a graph and the corresponding vertical vertices in a wiring diagram. For a graph $G$ with $|E|$ edges if 
$t  \in \mathbb{Z}^{|E|}$, and $W_{G}$ is its corresponding wiring diagram then if 
$r  \in \mathbb{Z}^{|E|}$ with $r_{i}=t_{i}$ if the $i^{th}$ edge/vertex is a chain/horizontal and $r_{i}=-t_{i}$  if the $i^{th}$ edge/vertex is a sheaf/vertical, the $D_{t}$ and $(W_{G})_{r}$ are the same diagram.

\begin{figure}[h]
\begin{center}
\includegraphics{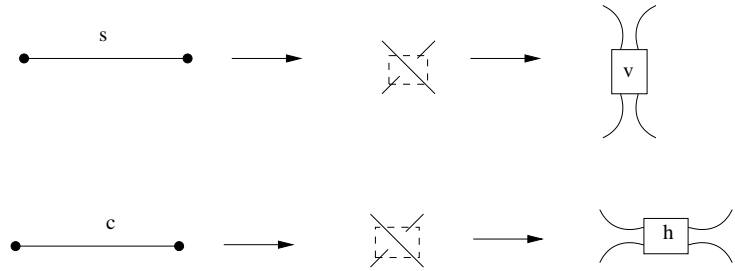}
\end{center}
\caption{Colored graph becomes a wiring diagram} \label{fig: g2w}
\end{figure}

\begin{lem}
Let $G_{W}$ and $W_{G}$ be a graph and corresponding wiring diagram.  Say that $S \subset E(G)$ represents the spanning subgraph of $G_{W}$ with edge set S. Let $\sigma \in \{0,1\}^{E}$  be as described above. Then 

$$d^{|S|+2k(S)-V-1}=<(W_{G})_{\sigma}>$$.
\end{lem}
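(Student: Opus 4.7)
\medskip

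\noindent\textbf{Proof proposal.} Since $(W_G)_\sigma$ is a disjoint union of simple closed curves, the axiom for the bracket gives $\langle(W_G)_\sigma\rangle = d^{c(\sigma)-1}$, where $c(\sigma)$ is the number of circles in the state. So the lemma reduces to proving $c(\sigma) = |S| + 2k(S) - V$. The plan is to realize $c(\sigma)$ as the number of boundary circles of an explicit planar surface, and then compute that number via Euler characteristic.

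First I would pin down the geometric meaning of the $\sigma \leftrightarrow S$ correspondence. Consulting the definitions of the zero/one fillings and the horizontal/vertical distinction in Figure \ref{fig: verthorz}, I would check that at a sheaf/vertical vertex the smoothing which merges the two adjacent shaded regions is the one-filling, while at a chain/horizontal vertex it is the zero-filling. Combined with the rule of Remark~\ref{sgnchc}, this identifies $S$ as precisely the set of vertices at which the $\sigma$-smoothing joins the two neighboring shaded regions. This step also reconciles the sign flip built into the sheaf $\leftrightarrow$ vertical convention, and it is the step I expect to be the most delicate.

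Next I would assemble a state surface $F_\sigma \subset S^2$ by taking the union of the $V$ shaded disks of $W_G$ and attaching, at each of the $|S|$ vertices flagged by $S$, a thin band along the arc of the chosen smoothing. Each band is a $1$-handle, so $\chi(F_\sigma) = V - |S|$. Because $F_\sigma$ is planar, each connected component is a holed sphere, and therefore $\chi(F_\sigma) = 2k_0(F_\sigma) - b(F_\sigma)$, where $b$ counts boundary circles. The components of $F_\sigma$ correspond bijectively to components of the spanning subgraph $(V,S)$, because two shaded disks lie in the same component of $F_\sigma$ exactly when they are joined through a sequence of bands indexed by edges of $S$; hence $k_0(F_\sigma) = k(S)$.

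Finally I would argue that $b(F_\sigma) = c(\sigma)$: every boundary circle of $F_\sigma$ separates a merged shaded region (a component of $F_\sigma$) from a merged unshaded region, and this is exactly the picture of a state circle of $(W_G)_\sigma$; conversely each state circle arises this way. Putting the three identifications together,
\begin{equation*}
c(\sigma) \;=\; b(F_\sigma) \;=\; 2k_0(F_\sigma) - \chi(F_\sigma) \;=\; 2k(S) - (V - |S|) \;=\; |S| + 2k(S) - V,
\end{equation*}
so $\langle(W_G)_\sigma\rangle = d^{|S| + 2k(S) - V - 1}$, as desired. Once Step~1 is in hand, the remaining arguments are routine Euler-characteristic bookkeeping.
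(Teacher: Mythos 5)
Your proposal is correct and follows essentially the same route as the paper: the paper likewise identifies the state circles of $(W_G)_\sigma$ with the boundary of a regular neighborhood of the spanning subgraph $S$ (shaded disks joined by bands at the vertices selected by $\sigma$) and then counts those circles with Euler's formula, obtaining $|S|-V+2k(S)$ per the face count $F_i=S_i-V_i+2$ on each component. Your Euler-characteristic bookkeeping $b=2k_0-\chi=2k(S)-(V-|S|)$ is the same computation in surface language, so there is nothing substantive to add.
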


\begin{proof}
 By construction a sheaf in S  corresponds to a 1 filling of the appropriate vertical vertex while a chain in S correspond to a 0 filling of the appropriate horizontal vertex. As indicated in Figure \ref{fig: rgnb} closed components of $(W_{G})_{\sigma}$ are exactly the boundary of a regular neighborhood of connected components of S. 
 
 \begin{figure}[h]
\begin{center}
\includegraphics{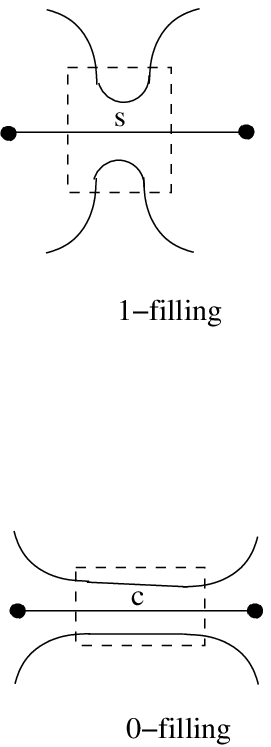}
\end{center}
\caption{Top: 1 filling of a vertical vertex. Bottom: 0 filling of a horizontal vertex} \label{fig: rgnb}
\end{figure}
 Say S has connected components $c_{1},...,c_{k(S)}$. Then let $V_{i}$, $S_{i}$, and $F_{i}$ be the number of vertices, edges and faces of $c_{i}$ respectively. Since G is planar so is each $c_{i}$ and thus $ F_{i}=S_{i}-V_{i}+2$. Then $\sum_{i=1}^{k(S)} F_{i}= |S|-V+2k(S)$.
 
  Notice that components of $(W_{G})_{s}$ are exactly the faces of the connected components. Thus the lemma follows.
 \end{proof}

\begin{thm}
Let $G_{W}$ and $W_{G}$ be a colored graph and corresponding wiring diagram. Say that they are connected. Give the edges of $G_{W}$ an arbitrary order, $\{e_{1},\dots,e_{E}\}$. Say that $G_{W}$ has V vertices. Augment the coloring of $G_{W}$ so that the $i^{th}$ edge $G_{W}$ is colored with either $c_{i}$ or $s_{i}$ depending on whether the edge was colored with  $c$ or $s$. Call this new graph $\tilde{G}_{W}$.  Then with the evaluation
\begin{eqnarray}
x_{c_{i}}&=&1\\
y_ {c_{i}}&=&\frac{x_{i}-1}{d}\\
x_{s_{i}}&=&\frac{x_{i}-1}{d}\\
y_{s_{i}}&=&1
\end{eqnarray}
$$ d^{E} W(\tilde{G}_{W})(d,d,d)=P(A,x_{1},...x_{E})$$
\end{thm}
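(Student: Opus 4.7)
The plan is to match the two sums term by term, using the subset expansion of the W-polynomial stated earlier and the preceding lemma to identify powers of $d$ with Kauffman brackets of the states $(W_G)_\sigma$.

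First, I would specialize the subset expansion
$$W(\tilde{G}_W)(t,z_1,z_2)=t^{k(G)-1}\sum_{S\subset E(G)}\Bigl\{\prod_{e\in S}x_{c(e)}\Bigr\}\Bigl\{\prod_{e\notin S}y_{c(e)}\Bigr\}z_1^{k(S)-k(G)}z_2^{n(S)}$$
at $t=z_1=z_2=d$. Since $G_W$ is connected ($k(G)=1$) and $S$ is a spanning subgraph on $V$ vertices, $n(S)=|S|-V+k(S)$, so the exponent of $d$ collapses to $2k(S)+|S|-V-1$. By the lemma this is exactly the exponent such that $d^{2k(S)+|S|-V-1}=\langle (W_G)_\sigma\rangle$, where $\sigma\in\{0,1\}^E$ is the state vector corresponding to $S$ under the chain/sheaf sign convention already fixed.

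Next I would verify that multiplying by $d^E$ converts the per-edge weight from the W-polynomial into the per-edge weight of the twist-bracket $P$, by a four-case check. For a chain edge $e_i$ with $e_i\in S$ we have $\sigma_i=0$ and $x_{c_i}=1$, while the twist-bracket weight is $(x_i-1)^0 d^{1-0}=d$; for a chain edge with $e_i\notin S$ we have $\sigma_i=1$ and $y_{c_i}=(x_i-1)/d$, versus twist-bracket weight $x_i-1$; for a sheaf edge with $e_i\in S$, $\sigma_i=1$ and $x_{s_i}=(x_i-1)/d$, versus $x_i-1$; and for a sheaf edge with $e_i\notin S$, $\sigma_i=0$ and $y_{s_i}=1$, versus $d$. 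In every case the ratio is exactly $d$, so $d^E\prod_{e\in S}x_{c(e)}\prod_{e\notin S}y_{c(e)}=\prod_{i=1}^E (x_i-1)^{\sigma_i}d^{1-\sigma_i}$.

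Combining these two ingredients, the sum $d^E W(\tilde{G}_W)(d,d,d)$ becomes
$$\sum_{S}\Bigl(\prod_{i=1}^E (x_i-1)^{\sigma_i}d^{1-\sigma_i}\Bigr)\langle (W_G)_\sigma\rangle,$$
and since the bijection $S\mapsto \sigma$ described before Remark \ref{sgnchc} is a one-to-one correspondence between spanning subgraphs of $G_W$ and states of $W_G$, this sum is exactly the definition of $P(A,x_1,\dots,x_E)$.

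The only real obstacle is bookkeeping: making sure the sign/state convention (sheaf-in-$S\leftrightarrow \sigma_i=1$, chain-in-$S\leftrightarrow \sigma_i=0$) is the same convention used when defining $L_\sigma$, so that the lemma's identification $d^{|S|+2k(S)-V-1}=\langle (W_G)_\sigma\rangle$ is applied to the matching state on both sides. Once that correspondence is fixed, the rest is the algebraic cross-check above.
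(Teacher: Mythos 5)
Your proposal is correct and follows essentially the same route as the paper: specialize the subset expansion at $t=z_1=z_2=d$, use Lemma 1 to identify $d^{|S|+2k(S)-V-1}$ with $\langle (W_G)_\sigma\rangle$, and match the per-edge weights under the chain/sheaf state correspondence. Your per-edge ``ratio is $d$'' bookkeeping is just a cleaner organization of the same cancellation the paper performs by collecting $d^{E-\sum\sigma_i}=\prod d^{1-\sigma_i}$.
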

\begin{proof}
\begin{eqnarray} \nonumber
&&d^{E} W( G )(d,d,d)=\sum_{S \subset E(G)} \{ \prod_{e \in S} x_{c(e)}\} \{\prod_{e \notin S} y_{c(e)}\}d^{E} d^{V-|S|+2 k(S)-1}\\ \nonumber
&= &\sum_{S \subset E(G)}\{ \prod_{e_{i} \in S\ c(e_{i})=c_{i} } x_{c(e)}\} \{ \prod_{e_{i} \in S\ c(e_{i})=s_{i} } x_{c(e)}\}\{\prod_{e_{i} \notin S c(e_{i})=c_{i}  } y_{c(e)}\}\{\prod_{e_{i} \notin S  c(e_{i})=s_{i} } y_{c(e)}\} d^{|E|} d^{|V|-|S|+2k(S)-1} \\ \nonumber
&=&\sum_{S \subset E(G)} \{\prod_{ e_{i} \in S \ c(e_{i})=s_{i}} \frac{x_{i}-1}{d}\}\{\prod_{ e_{i} \notin S \ c(e_{i})=c_{i}} \frac{x_{i}-1}{d}\} d^{E} d^{V-|S|+2k(S)-1}\\ \nonumber
&=& \sum_{\sigma \in \{0,1\}^{E}} \{\prod_{i=1}^{E} (x_{i}-1)^{\sigma_{i}}\}d^{E -\sum_{i=1}^{E} \sigma_{i}} <(W_{G})_{\sigma}> \\ \nonumber
&=&  \sum_{\sigma \in \{0,1\}^{E}} \{\prod_{i=1}^{E} (x_{i}-1)^{\sigma_{i}} d^{1-\sigma_{i}}\} <(W_{G})_{\sigma}> \\ \nonumber
&=&P_{W_{g}}(A,x_{1}, \dots, x_{E})
\end{eqnarray}
The fourth equality follows from the previous lemma and the fact that $\sigma_{i}=1$ for sheafs in S and chains not in S and $\sigma_{i}=0$ otherwise.
\end{proof}

Theorem 2 in \cite{cyc} was proven using an expression for a twist in $TL_{2}$ and the fact that it is the center of this ring. Theorem 1 from \cite{mainpap} and Theorem 3 allow an alternate proof. Here we prove the theorem in this context.

\begin{proof} (Theorem 2)
 Let $G_{W}$ be the colored graph associated to the wiring diagram $\tilde{L}$. Note that $G_{W}$ has k edges. Let $r\in \mathbb{Z}^{k}$ and let $t\in\mathbb{Z}^{k}$ such that $D_{t}$ as constructed from $G_{W}$ is the same diagram $\tilde{L}_{r}$. Consider the evaluation of W-polynomial as indicated in Theorem 1. Then
\begin{eqnarray} \nonumber
& &d^{k} <D_{t}>=d^{k} W( \hat{G}_{t} )(d,d,d)\\ \nonumber
&=&d^{k-|V(G)|-1}\sum_{S \subset E(G)} \{ \prod_{e \in S} x_{c(e)}\} \{\prod_{e \notin S} y_{c(e)}\} d^{(|S|+2 k(S))} \\ \nonumber
&= &\sum_{S \subset E(G)}\{ \prod_{chains \in S } x_{c(e)}\} \{ \prod_{sheafs \in S } x_{c(e)}\}\\ \nonumber
& & \{\prod_{chains \notin S } y_{c(e)}\}\{\prod_{sheafs \notin S } y_{c(e)}\} d^{k} d^{|V|-|S|+2k(S)-1} \\ \nonumber
 &= & \sum_{S \subset E(G)} \{ \prod_{e_{i} \in S \ chain }A^{t_{i}}\} \{ \prod_{e_{i} \in S \ sheaf } 
\frac{(-A^{3})^{t_{i}}-A^{-t_{i}}}{d}\}\\ \nonumber
 & & \{\prod_{e_{i} \notin S \ chains } \frac{(-A^{-3})^{t_{i}}-A^{t_{i}}}{d}\}\{\prod_{e_{i} \notin S \ sheaf} A^{-t_{i}}\} d^{k} d^{|V|-|S|+2k(S)-1} \\ \nonumber
 &=&  \{\prod_{e_{i} \in G \ chain} A^{t_{i}}\}\{\prod_{e_{i} \in G \ sheaf} A^{-t_{i}}\} \\ \nonumber & &
\sum_{S \subset E(G)} \{ \prod_{e_{i} \in S \ sheaf } 
\frac{(-A^{4})^{t_{i}}-1}{d}\} \{\prod_{e_{i} \notin S \ chains } \frac{(-A^{-4})^{t_{i}}-1}{d}\} d^{k} d^{|V|-|S|+2k(S)-1} \\ \nonumber
&=&A^{\sum_{i=1}^{k} r_{i}}\sum_{\sigma \in \{0,1\}^{E}} \prod_{i=1}^{E}( \frac{(-A^{-4})^{r_{i}}-1}{d})^{\sigma_{i}} d^{k} <(W_{G})_{\sigma}> \\ \nonumber
&=& A^{\sigma(r)} \sum_{\sigma \in \{0,1\}^{k}}  \prod_{i=1}^{E} ((-A^{-4})^{r_{i}}-1)^{\sigma_{i}} d^{k-\sum_{i=1}^{k} \sigma_{i}}   <(W_{G})_{\sigma}>  \\ \nonumber
&=& A^{\sigma(r)} \sum_{\sigma \in \{0,1\}^{k}}  \prod_{i=1}^{E} ((-A^{-4})^{r_{i}}-1)^{\sigma_{i}} d^{\sum_{i=1}^{k}1- \sigma_{i}}  <(W_{G})_{\sigma}>  \\ \nonumber
&=&A^{\sigma(r)}  \sum_{\sigma \in \{0,1\}^{k}} ( \prod_{i=1}^{E}((-A^{-4})^{r_{i}}-1)^{\sigma_{i}} d^{1-\sigma_{i}})  <(W_{G})_{\sigma}> \\ \nonumber
&=&A^{\sigma(r)} P_{\tilde{L}}(A,(-A^{-4})^{r_{i}},\dots,(-A^{-4})^{r_{E}})
 \end{eqnarray}
 
 The sixth equality follows from Lemma 1 and the following two facts: 1)  $\sigma_{i}=1$ for sheafs in S and chains not in S and $\sigma_{i}=0$ otherwise, and 2)  the sign convention is opposite when comparing the crossings in a sheaf to the crossing in its  corresponding filling and the same when comparing the crossings in a chain and the crossings in its corresponding filling. (As noted in Remark \ref{sgnchc}).
\end{proof}

\begin{cor}
Let K be a link with twist number n and diagram D realizing that twist number.
Then $d^{n} <D_{t}>$ is a polynomial in n+1 variables for all $t \in \mathbb{Z}^{k}$. 
\end{cor}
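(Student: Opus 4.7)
The plan is to obtain this corollary as an immediate consequence of Theorem 2 (the twist-polynomial formula just proved); the substantive work has already been done there, and what remains is essentially a bookkeeping argument plus a dictionary check.

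First I would translate the hypothesis into the language of wiring diagrams. Since $D$ realizes the twist number $n$, its associated signed graph has exactly $n$ twist-equivalence classes (each a chain, a sheaf, or a single isolated crossing), so the colored graph $\hat{G}$ built in section 3 has exactly $n$ edges. By the correspondence developed in section 4, $\hat{G}$ produces a wiring diagram $\tilde{L}$ with precisely $n$ twist sites, and for any $t\in\mathbb{Z}^{n}$ the diagram $D_{t}$ coincides with $\tilde{L}_{r}$ for the sign-adjusted vector $r$ of Remark \ref{sgnchc}. In particular $k=n$, which reconciles the two symbols appearing in the statement.

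Next I would invoke Theorem 2 directly to write
$$d^{n}<D_{t}>\;=\;A^{\sigma(r)}\,P_{\tilde{L}}\!\left(A,\,(-A^{-4})^{r_{1}},\ldots,(-A^{-4})^{r_{n}}\right).$$
By the definition of the twist-bracket, $P_{\tilde{L}}(A,x_{1},\ldots,x_{n})$ is a polynomial in the $n+1$ indeterminates $A,x_{1},\ldots,x_{n}$ (it is a finite sum of terms $\prod_{i}(x_{i}-1)^{\sigma_{i}}d^{1-\sigma_{i}}<L_{\sigma}>$, each of which is manifestly polynomial in those variables). The prefactor $A^{\sigma(r)}$ is a Laurent monomial in the variable $A$ alone, so the product $A^{\sigma(r)}P_{\tilde{L}}(A,x_{1},\ldots,x_{n})$ still lies in $\mathbb{Z}[A^{\pm 1},x_{1},\ldots,x_{n}]$, and $d^{n}<D_{t}>$ is the evaluation of this polynomial at $x_{i}=(-A^{-4})^{r_{i}}$.

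The only step that actually demands attention is the first: verifying that a diagram realizing the twist number yields a $\hat{G}$ with exactly $n$ edges, so that $k=n$. This is built into the construction of $\hat{G}$ (each chain or sheaf collapses to a single colored edge) together with the choice of $D$, but it must be stated explicitly because the entire reduction to Theorem 2 depends on this equality. Beyond that, there is no genuine obstacle; the corollary is essentially a restatement of what Theorem 2 proves, repackaged to emphasize the universal polynomial structure with $n+1$ arguments.
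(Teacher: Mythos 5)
Your proposal is correct and matches the paper's treatment: the paper states this corollary with no written proof, as an immediate consequence of Theorem 2, which is exactly the reduction you carry out (identifying $k$ with the twist number $n$ via the graph/wiring-diagram dictionary and observing that $P_{\tilde{L}}$ is a polynomial in the $n+1$ variables $A, x_{1},\dots,x_{n}$ evaluated at $x_{i}=(-A^{-4})^{r_{i}}$). The only cosmetic looseness --- that the monomial prefactor $A^{\sigma(r)}$ varies with $t$ and so the ``fixed polynomial'' claim holds only up to a unit --- is present in the paper's own statement as well and is harmless for the Mahler measure application that follows.
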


The spanning tree expansion of the W-polynomial allows a slight sharpening of this result.

\begin{thm} \label{main_thm_1}
 Let L be a link with diagram D such that twist number of D realizes the twist number of K. Let G be the colored graph with E edges V vertices and an order on the edges so that $\hat{G}_{t}=D$  for $t \in \mathbb{Z}^{k}$.  For any spanning subtree F let $p_{F}=\#\{sheafs \in F\}+\#\{chains \notin F\}$ and $p=max_{F}(p_{F})$. Then
 $d^{p}<D_{t}>$ is a polynomial in k+1 variables for all $t \in \mathbb{Z}^{k}$.  Furthermore $p\leq$ the twist number of L.
 \end{thm}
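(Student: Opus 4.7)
The plan is to apply the spanning tree expansion of Section 2 (the formula (1) for $W(G)$ summed over spanning subtrees) to $W(\hat{G}_{t})(d,d,d) = \langle D_{t}\rangle$, and to analyze the factor each edge contributes under the evaluation from Theorem \ref{w2kb}. Since $G$ is connected, the prefactor $d^{k(G)-1}$ equals $1$, so it suffices to control the product over edges for each spanning tree $F$ separately.

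First I would carry out the four case-analyses per edge (chain versus sheaf, crossed with internally/externally active versus inactive). The key calculation is that the edges whose factor already lies in $\mathbb{Z}[A^{\pm 1}]$-times-one-of-the-monomials-$A^{\pm t_i},(-A^{\pm 3})^{t_i}$ are precisely the chains in $F$ and the sheafs not in $F$: for a chain in $F$ that is internally active, $x_{c_{t_{i}}}+d\, y_{c_{t_{i}}} = A^{t_i} + \left((-A^{-3})^{t_i}-A^{t_i}\right) = (-A^{-3})^{t_{i}}$; if internally inactive, the factor is simply $x_{c_{t_{i}}}=A^{t_{i}}$; dually, for a sheaf not in $F$ that is externally active the factor is $d\, x_{s_{t_{i}}}+y_{s_{t_{i}}} = (-A^{3})^{t_{i}}$, and if externally inactive it is $y_{s_{t_{i}}}=A^{-t_{i}}$. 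The remaining edges---chains not in $F$ and sheafs in $F$---each produce a factor with one power of $d$ in a denominator, which in all four of those sub-cases becomes a Laurent polynomial in $A$ after multiplication by a single factor of $d$.

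The number of such ``bad'' edges for a given spanning tree $F$ is exactly $p_{F}=\#\{\text{sheafs}\in F\}+\#\{\text{chains}\notin F\}$. Therefore $d^{p_{F}}$ times the tree-$F$ summand is a Laurent polynomial in $A$ and in the $k$ monomial quantities $A^{\pm t_{i}}$ (one per edge), which places it in the same ``$k+1$ variables'' framework already used in the Corollary. Multiplying the entire sum by $d^{p}$ with $p=\max_{F}p_{F}$ handles every tree uniformly, because the residual factor $d^{\,p-p_{F}}$ for other trees is itself a polynomial in $A$. Summing over spanning trees then yields that $d^{p}\langle D_{t}\rangle$ is a polynomial in $k+1$ variables.

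The bound $p\leq$ twist number of $L$ is immediate from the definitions: for every spanning tree $F$, $p_{F}\leq \#\{\text{sheafs in }G\}+\#\{\text{chains in }G\}=|E(G)|$, and $|E(G)|$ equals the twist number of $L$ by construction of $\hat{G}$. The main obstacle in this plan is the careful verification of the four telescoping identities in the second paragraph; once those are in hand, the counting of bad factors and the bound on $p$ are combinatorially routine.
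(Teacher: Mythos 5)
Your proposal is correct and follows essentially the same route as the paper: the spanning tree expansion of $W(\hat{G}_{t})$, the per-edge case analysis showing that internally active/inactive chains and externally active/inactive sheafs contribute monomials (via the identities $x_{c_{t_{i}}}+d\,y_{c_{t_{i}}}=(-A^{-3})^{t_{i}}$ and $d\,x_{s_{t_{i}}}+y_{s_{t_{i}}}=(-A^{3})^{t_{i}}$), while each sheaf in $F$ and each chain not in $F$ contributes exactly one power of $d$ in a denominator, giving the count $p_{F}$ and the bound $p\leq |E(G)|=$ twist number. The paper's proof is the same computation written out over the eight edge types.
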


\begin{proof}

Consider $W(G_{t})$ evaluated as in Theorem 1. Each edge in $\hat{G}_{t}$ is a chain or sheaf. Each spanning subtree classifies the edges according to internal/external and active/inactive. As such there are eight different types of edges as one sums over spanning subtrees. Accordingly for each spanning subtree we have a partition of $\{1,2, \ldots, k\}$ corresponding to these 8 different groups, where k is the twist number of D. Let each set of the partition be denoted by the corresponding word of the appropriate form; (I/E)(A/I)(C/S).
   
 \begin{eqnarray} W(\hat{G}_{t})&=& \nonumber
 \sum_{F} \{\prod_{i \in IAC} (x_{c_{t_{i}}}+d y_{c_{t_{i}}})\}
 \{\prod_{i \in IAS} (x_{s_{t_{i}}}+d y_{s_{t_{i}}})\}  
 \{\prod_{i \in EAC} ( d x_{c_{t_{i}}}+y_{c_{t_{i}}})\} \\ \nonumber
  & &\{\prod_{i \in EAS} ( d x_{s_{t_{i}}}+y_{s_{t_{i}}})\}
 \{\prod_{i \in IIC} x_{c_{t_{i}}}\} 
 \{\prod_{i \in IIS} x_{s_{t_{i}}}\}
 \{\prod_{i \in EIC} y_{c_{t_{i}}}\}
 \{\prod_{i \in EIS} y_{s_{t_{i}}}\} \nonumber
 \end{eqnarray}
 
 Apply the evaluation given in Theorem 1. Notice that $x_{c_{t_{i}}}+d y_{c_{t_{i}}}=(-A^{-3})^{t_{i}}$ and $d x_{s_{t_{i}}}+y_{s_{t_{i}}}=(-A^{3})^{t_{i}}$.
\begin{eqnarray} \nonumber
<D_{t}> &=& \sum_{F} \{\prod_{i \in IAC} (-A^{3})^{-t_{i}}\}
  \{\prod_{i \in IIC} A^{t_{i}}\}
   \{\prod_{i \in EAS} (-A^{3})^{t_{i}}\}
   \{\prod_{i \in EIS}A^{-t_{i}}\}\\ \nonumber
 & &\{\prod_{i \in IAS} (\frac{(-A^{3})^{t_{i}}-A^{-t_{i}}}{d}+d A^{-t_{i}})\}
 \{\prod_{i \in EAC} ( d A^{t_{i}}+\frac{(-A^{-3})^{t_{i}}-A^{t_{i}}}{d}))\} \\ \nonumber
 & &\{\prod_{i \in IIS}\frac{(-A^{3})^{t_{i}}-A^{-t_{i}}}{d}\}
 \{\prod_{i \in EIC}\frac{ (-A^{3})^{t_{i}}-A^{t_{i}}}{d}\} \nonumber
 \end{eqnarray}
 \pagebreak
 \begin{eqnarray} \nonumber
 &=&\sum_{F} d^{-(p_{F})}  (-A^{3})^{(\sum_{EAS} t_{i}-\sum_{IAC}t_{i})} A^{(\sum_{IIC}t_{i}-\sum_{EIS} t_{i})} \\ \nonumber
& &  \{\prod_{i \in IAS} (-A^{3})^{t_{i}}+A^{-t_{i}}(d^{2}-1)\}\{\prod_{i \in IIS}  (-A^{3})^{t_{i}}-A^{-t_{i}}\}\\ \nonumber
 & & \{\prod_{i \in EAC} (-A^{-3})^{t_{i}}+A^{t_{i}}(d^{2}-1)\}\{\prod_{i \in EIC}(-A^{-3})^{t_{i}}-A^{t_{i}}\} \\ \nonumber
 &=&\sum_{F} d^{-(p_{F})}  (-A^{3})^{(\sum_{EAS} t_{i}-\sum_{IAC}t_{i})} A^{(\sum_{IIC}t_{i}-\sum_{EIS} t_{i})} \\ \nonumber
 & & A^{-\sum_{IS} t_{i}}  \{\prod_{i \in IAS} (-A^{4})^{t_{i}}+(d^{2}-1)\}\{\prod_{i \in IIS}  (-A^{4})^{t_{i}}-1\}\\ \nonumber
 & & A^{\sum_{EC}t_{i}}  \{\prod_{i \in EAC} (-A^{-4})^{t_{i}}+(d^{2}-1)\}\{\prod_{i \in EIC}(-A^{-4})^{t_{i}}-1\} \\ \nonumber 
 &=&\sum_{F} d^{-(p_{F})}  (-A^{3})^{\sum_{EAS} t_{i}-\sum_{IAC}t_{i}} A^{\sum_{IIC}t_{i}-\sum_{EIS} t_{i}+\sum_{EC}t_{i}-\sum_{IS} t_{i}} \\ \nonumber
  & & \{\prod_{i \in IAS} (-A^{4})^{t_{i}}+(d^{2}-1)\}\{\prod_{i \in IIS}
  (-A^{4})^{t_{i}}-1\}\\ \nonumber
 & & \{\prod_{i \in EAC} (-A^{-4})^{t_{i}}+(d^{2}-1)\}\{\prod_{i \in EIC}(-A^{-4})^{t_{i}}-1\}\\ \nonumber
  \end{eqnarray}
   
 Then $d^{p} <D_{t}>$ is a polynomial in k+1 variables for all $t \in \mathbb{Z}^{k}$. Furthermore if there is a spanning subtree consisting of all the sheafs in $\hat{G}_{t}$ then $p=$twist number of K. Otherwise, $p <$ twist number of K. 
 \end{proof}
\section{Mahler Measure}

\begin{Def} Let $f\in \mathbb{C}[z_{1}^{\pm1},\dots, z_{t}^{\pm1}]$. The Mahler measure of f is
	
	$$M(f)=exp \int_{0}^{1} \ldots \int_{0}^{1} log|f(e^{2 \pi i \theta_{1}}, \ldots , e^{2 \pi i \theta_{t}})| d\theta_{1} \ldots d\theta_{t}$$
	\end{Def}

For a Laurent polynomial with a single variable, $f(z):=a_{0}z^{k}\prod_{i=1}^{n}(a_{i}-z)$ Jensen's formula shows that 

$$M(f)=|a_{0}|\prod_{i=1}^{n}max(1,|a_{i}|)$$

One may also consider the Euclidean Mahler measure $M_{e}(f):=\prod_{i=1}^{n}max(1,|a_{i}|)$. Note also that the (Euclidean) Mahler measure is multiplicative. 

\begin{rmk} \label{norm} \end{rmk}  Let $||f||$ denote the $L^{2}$ norm of $f$. Then Schinzel shows in \cite{mm} that $M(f) < ||f||$. This observation allows us to control the Mahler measure of  the Kauffman bracket by controlling its coefficients.

\begin{cor}
 For all $t \in \mathbb{Z}^{k}$, $||d^{p} <D_{t}>|| 
< C$ for  some positive constant C. Furthermore since $d$ is cyclotomic,$$M(<D_{t}>)=M(d^{p} <D_{t}>)<||d^{p} <D_{t}>|| < C$$ 
 \end{cor}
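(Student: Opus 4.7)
The plan is to combine Theorem \ref{main_thm_1}, Schinzel's inequality (Remark \ref{norm}), and the multiplicativity of the Mahler measure. The proof of Theorem \ref{main_thm_1} exhibits $d^{p}\langle D_{t}\rangle$ as a finite sum, indexed by spanning subtrees $F$ of the colored graph $G$, of terms of the form
\[
T_{F}(A,t)\;=\;\epsilon_{F}(t)\,A^{L_{F}(t)}\,Q_{F}\!\bigl(A,(-A^{-4})^{t_{1}},\ldots,(-A^{-4})^{t_{k}}\bigr),
\]
where $L_{F}:\mathbb{Z}^{k}\to\mathbb{Z}$ is linear, $\epsilon_{F}(t)\in\{\pm1\}$, and $Q_{F}\in\mathbb{Z}[A^{\pm1},u_{1}^{\pm1},\ldots,u_{k}^{\pm1}]$ is a fixed Laurent polynomial obtained by expanding the binomial factors $(-A^{\pm4})^{t_{i}}\pm(\mathrm{const})$ that appear in the proof (using $(-A^{4})^{t_{i}}=u_{i}^{-1}$ where $u_{i}=(-A^{-4})^{t_{i}}$).

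Next I would bound $\|T_{F}\|$ uniformly in $t$. Because the monomial unit $\epsilon_{F}(t)A^{L_{F}(t)}$ has constant modulus on the unit circle, it suffices to bound the specialization of $Q_{F}$. Writing $Q_{F}=\sum_{\alpha}c_{\alpha}A^{a_{0}}\prod_{i}u_{i}^{a_{i}}$, the substitution sends each monomial to $\pm c_{\alpha}A^{a_{0}-4\sum a_{i}t_{i}}$; after regrouping by the exponent $M$ of $A$, the coefficient of $A^{M}$ in the specialization is a signed sum of some subset $S_{M}$ of the $c_{\alpha}$. Setting $y_{M}:=\sum_{\alpha\in S_{M}}|c_{\alpha}|\ge 0$, one obtains
\[
\|T_{F}\|^{2}\;\le\;\sum_{M}y_{M}^{2}\;\le\;\Bigl(\sum_{M}y_{M}\Bigr)^{\!2}\;=\;\Bigl(\sum_{\alpha}|c_{\alpha}|\Bigr)^{\!2},
\]
a constant depending only on $F$ and $G$. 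Summing over the finitely many spanning subtrees of $G$ and applying the triangle inequality for the $L^{2}$ norm then gives $\|d^{p}\langle D_{t}\rangle\|\le C'$ for some constant $C'$ depending only on $G$, and the first assertion follows after enlarging to $C:=C'+1$.

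Finally, Schinzel's inequality gives $M(d^{p}\langle D_{t}\rangle)\le\|d^{p}\langle D_{t}\rangle\|<C$, while $d=-A^{-2}(1+A^{4})$ is a monomial unit times the cyclotomic polynomial $\Phi_{8}(A)=1+A^{4}$, so $M(d)=1$ and multiplicativity of the Mahler measure yields $M(d^{p}\langle D_{t}\rangle)=M(d)^{p}M(\langle D_{t}\rangle)=M(\langle D_{t}\rangle)$. Chaining the inequalities proves the second assertion. The only real content is the structural extraction of the summands $T_{F}$ from the formula in Theorem \ref{main_thm_1}; in particular one has to verify that all the mixed $(-A^{\pm3})^{t_{i}}$, $A^{\pm t_{i}}$, $(-1)^{t_{i}}$ and $(-A^{\pm4})^{t_{i}}$ factors repackage into the triple $(\epsilon_{F}(t),A^{L_{F}(t)},Q_{F})$. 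Once this bookkeeping is done, the $L^{2}$ estimate and the Mahler-measure transfer are routine.
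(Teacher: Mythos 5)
Your argument is essentially the paper's own proof: the paper simply asserts that the corollary ``follows directly from the fact that $d^{p}\langle D_{t}\rangle$ is a polynomial in $k+1$ variables and Remark \ref{norm},'' and your write-up just makes explicit the bookkeeping behind that one line (bounding the coefficients of the specialization by the fixed coefficients of the $(k+1)$-variable polynomial, then invoking Schinzel's inequality and the cyclotomic/multiplicativity step for $d=-A^{-2}(1+A^{4})$). The proposal is correct and takes the same route, only in greater detail.
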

 
\begin{proof}
The corollary follows directly from the fact that $d^{p} <D_{t}>$ is a polynomial in $k+1$ variables and Remark \ref{norm}.
\end{proof}

\begin{rmk}\end{rmk}
Any diagram of a link that realizes the twist number of the link and consists of only chains is an example where p is strictly less than the twist number of the link. Pretzel links provide a specific example.

\section{Calculations with the W-polynomial}

In \cite{mainpap} Jin and Zhang use the W-polynomial to find a method for calculating the Kauffman bracket of rational links. Here we apply the same technique for a different class of links.

Suppose that we repeatedly perform a rational surgery R on n unknotted components of a diagram D where the component on which we do our surgery intersects the plane of projection in two shaded regions as in Figure \ref{fig: rptsrg}. This in turn corresponds to repeatedly adding a graph to the two vertices representing the indicated shaded regions as in Figure \ref{fig: grphadd}. Notice that we can twist the two strands in the diagram by repeatedly performing a $\pm1/2$ rational surgery. 

\begin{figure}[h]
\begin{center}
\includegraphics{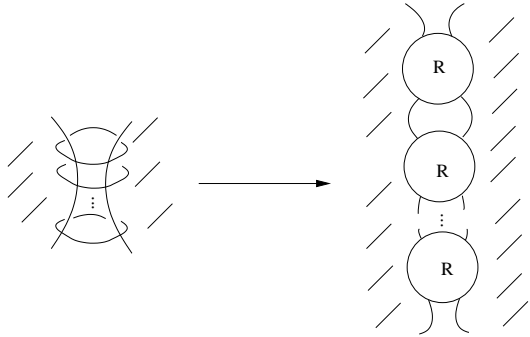}
\end{center}
\caption{Repeated rational surgery} \label{fig: rptsrg}
\end{figure}

\begin{figure}[h]
\begin{center}
\includegraphics{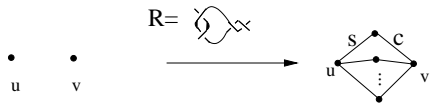}
\end{center}
\caption{Repeated graph additions} \label{fig:  grphadd}
\end{figure}

Let $G$ be the graph that corresponds to the original diagram $D$ and name the two pictured vertices $u$ and $v$. Let $G_{i}$ be the graph that corresponds to the diagram found by adding $i$ copies of the graph of the rational tangle. Here we want to consider the following form of the W-polynomial.
 
$$W(G)(d,d,d)=d^{-k(G)-|V(G)|} \sum_{S \subset E(G)}\{ \prod_{e\in S} x_{f(e)}\}\{ \prod_{s \notin S} y_{f(e)}\} d^{|S|+2 k<S>}$$

Let $S \subset E(G)$ represent the spanning subgraph of $G$ with edge set S as previously defined. Say that S is in state 1 if $u$ and $v$ are in different connected components and say that it is in state 2 if they are in the same component, represented as $S:1$ or $S:2$ respectively. For $G_{i}$,  $E(G_{i})=E(G_{i-1}) \cup E(C)$  and $V(G_{i})=v(G_{i}) \cup V(C)-\{u,v\}$, where C is the graph of the rational tangle R and we understand $u$ and $v$ to represent the vertices in C that are identified with the vertices $u$ and $v$ in $G_{i-1}$. Then

\begin{eqnarray} \nonumber
& &W(G_{i})(d,d,d)=d^{-k(G_{i})-|V(G_{i})|} \sum_{S \subset G_{i}}\{ \prod_{e\in S} x_{f(e)}\}\{ \prod_{s \notin S} y_{f(e)}\} d^{|S|+2 k<S>}\\ \nonumber
&=&d^{-k(G_{i})-|V(G_{i})|} \sum_{S \subset G_{i-1} \cup C }\{ \prod_{e\in S} x_{f(e)}\}\{ \prod_{s \notin S} y_{f(e)}\} d^{|S|+2 k<S>}\\ \nonumber
&=&d^{-k(G_{i})-|V(G_{i})|} \sum_{T \subset G_{i-1} }\{ \prod_{e\in T} x_{f(e)}\}\{ \prod_{e \notin T} y_{f(e)}\} d^{|T|+2 k<T>}  \sum_{W \subset C}\{ \prod_{e\in W} x_{f(e)}\}\{ \prod_{e \notin W} y_{f(e)}\} d^{|S|+2 \delta(T,W)}\\ \nonumber
&=&d^{-k(G_{i})-|V(G_{i})|} (\sum_{T:1 \subset G_{i-1} }\{ \prod_{e\in T} x_{f(e)}\}\{ \prod_{s \notin T} y_{f(e)}\} d^{|T|+2 k<T>}+\sum_{T:2 \subset G_{i-1} }\{ \prod_{e\in T} x_{f(e)}\}\{ \prod_{s \notin T} y_{f(e)}\} d^{|T|+2 k<T>}) \\ \nonumber & & \sum_{W \subset C}\{ \prod_{e\in W} x_{f(e)}\}\{ \prod_{e \notin W} y_{f(e)}\} d^{|W|+2 \delta(T,W)} \nonumber
\end{eqnarray}
Where $\delta(T,W)=k<T\cup W>-k<T>$ is the change in the number of connected components after adding the subgraph of C to T, the subgraph of G. For subgraphs $T$ and $W$ as above, if we know the state of $T$ and the state of $T\cup W$ then we can find $\delta(T,W)$. In fact

\begin{lem}
If

 $T:1$ and $(T \cup W):1$ then $\delta(T,W)=k<W> -2$. 
 
 $T:1$ and $(T \cup W):2$ then $\delta(T,W)=k<W> -2$.
 
 $T:2$ and $(T \cup W):1$ then $\delta(T,W)=k<W> -2$.
 
 $T:2$ and $(T\cup W):2$ then $\delta(T,W)=k<W>-1$
 
 \label{delta}
 \end{lem}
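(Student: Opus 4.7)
My plan is to reduce everything to the Euler identity $k(H) = |V(H)| - |E(H)| + n(H)$ relating the number of components to the cyclomatic number of a graph $H$. Applying this identity to the three spanning subgraphs in play, namely $T$ inside $G_{i-1}$, $W$ inside $C$, and $T\cup W$ inside $G_{i}$, and using the gluing data $|V(G_{i})| = |V(G_{i-1})| + |V(C)| - 2$ together with $|E(T\cup W)| = |T|+|W|$, a one-line subtraction gives
$$\delta(T,W) \;=\; k\langle T\cup W\rangle - k\langle T\rangle \;=\; k\langle W\rangle - 2 \;+\; \Delta n, \qquad \Delta n := n\langle T\cup W\rangle - n\langle T\rangle - n\langle W\rangle.$$
So the whole lemma collapses to computing the ``extra independent cycle'' count $\Delta n$ in each of the four listed cases.

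Next I will compute $\Delta n$ combinatorially. Since $T$ and $W$ meet only at the two vertices $u,v$ after the identification, I can choose spanning forests $F_{T}\subset T$ and $F_{W}\subset W$ and observe that $F_{T}\cup F_{W}$ is itself a spanning forest of $T\cup W$ in every case \emph{except} the one where $F_{T}$ already contains a $u$--$v$ path \emph{and} $F_{W}$ also contains a $u$--$v$ path; in that single exceptional case, the concatenation of these two paths is an independent cycle of $F_{T}\cup F_{W}$, and it is the only one, because any cycle of $T\cup W$ not contained in $T$ or in $W$ must cross the attaching set $\{u,v\}$ and therefore decomposes into a $T$-arc and a $W$-arc between $u$ and $v$. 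Thus $\Delta n = 1$ precisely when $T:2$ and $W:2$, and $\Delta n=0$ in all other sub-cases.

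To finish I re-index by the states of $T$ and of $T\cup W$ as the statement demands. The key easy observation is that because $T,W$ share only $\{u,v\}$, a $u$--$v$ path in $T\cup W$ must lie entirely in $T$ or entirely in $W$, so $(T\cup W):2$ iff $T:2$ or $W:2$. This immediately implies the case $T:2,(T\cup W):1$ cannot occur, and it pins down when $\Delta n=1$: only the sub-case $T:2,\,W:2$ (which forces $(T\cup W):2$) produces the extra cycle. Plugging $\Delta n\in\{0,1\}$ into the displayed formula for $\delta$ according to the case at hand then yields the four claimed identities.

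The main obstacle will be the clean verification of the dichotomy $\Delta n\in\{0,1\}$, i.e.\ ruling out any ``second'' new cycle created by the two-vertex identification. The spanning-forest description above makes this transparent and is the only nontrivial input; after that, the bookkeeping with the Euler identity and the translation of $W$'s state into the $(T\cup W)$-state statement are entirely routine.
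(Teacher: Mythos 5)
Your argument is sound but takes a genuinely different route from the paper's. The paper disposes of the lemma in one line by directly counting components: the components of $W$ not meeting $\{u,v\}$ are all new, while the components of $W$ meeting $u$ or $v$ are absorbed into those of $T$, the absorption costing two components except when both $T$ and $W$ already join $u$ to $v$, in which case it costs only one. You instead route everything through the Euler identity, reducing the lemma to the excess cyclomatic number $\Delta n$, which you control by a spanning-forest/arc-decomposition argument. Your reduction $\delta(T,W)=k\langle W\rangle-2+\Delta n$ is correct (the bookkeeping $|V(G_i)|=|V(G_{i-1})|+|V(C)|-2$ and $|E(T\cup W)|=|T|+|W|$ checks out against the paper's gluing description), and the dichotomy $\Delta n\in\{0,1\}$, with $\Delta n=1$ exactly when both $T$ and $W$ connect $u$ to $v$, is right; the observation that a simple $u$--$v$ path or cycle in $T\cup W$ cannot switch between $T$ and $W$ except at $u$ or $v$ is the key input and you argue it correctly. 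What the cyclomatic route buys is a rigorous account of the ``exactly one new independent cycle'' claim that the paper's one-line count leaves implicit; what it costs is length.

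One point deserves attention. Your own computation shows that $\delta$ is governed by the state of $W$, not by the state of $T\cup W$: $\delta=k\langle W\rangle-1$ precisely when $T:2$ \emph{and} $W:2$. The fourth case of the lemma as printed is indexed by $(T\cup W):2$, which, given $T:2$, also admits the sub-case $W:1$; there your formula gives $\delta=k\langle W\rangle-2$, not $k\langle W\rangle-1$. So ``plugging in $\Delta n$ according to the case at hand'' does not deliver the fourth identity verbatim; it delivers the corrected, $W$-indexed version. That corrected version is what the paper actually uses afterward --- it is exactly what makes $a_{2,2}=a_{1,1}+d^{2}a_{1,2}$ in the next lemma, with the $a_{1,1}$ term coming from $W:1$ and the $d^{2}a_{1,2}$ term from $W:2$ --- so your proof establishes the statement that is needed, but you should say explicitly that case 4 must be read as conditioned on the state of $W$ rather than asserting that all four printed identities follow as stated.
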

 
\begin{proof}
Simply count the number of components in W that are not already counted by components of T.
\end{proof}

Partition the subgraphs S of C into 4 types depending on how it changes the state of the spanning subgraph T of $G_{i-1}$. T can be in  state 1 or state 2 and $T \cup S$, $S \subset C$ can be in state 1 or state 2. Categorize subgraphs S of C accordingly. Then let
\begin{eqnarray} \nonumber
a_{1,1}&=&\sum_{S \subset C\ s.t\ T:1 \& T\cup S:1} \{ \prod_{e\in S} x_{f(e)}\}\{ \prod_{s \notin S} y_{f(e)}\} d^{|S|+2 \delta(T,S)}\\ \nonumber
a_{1,2}&=&\sum_{S \subset C\ s.t\ T:1 \& T\cup S:2}  \{ \prod_{e\in S} x_{f(e)}\}\{ \prod_{s \notin S} y_{f(e)}\} d^{|S|+2 \delta(T,S)}\\ \nonumber
a_{2,1}&=&\sum_{S \subset C\ s.t\ T:2 \& T\cup S:1} \{ \prod_{e\in S} x_{f(e)}\}\{ \prod_{s \notin S} y_{f(e)}\} d^{|S|+2 \delta(T,S)}\\ \nonumber
a_{2,2}&=&\sum_{S \subset C\ s.t\ T:2 \& T\cup S:2}  \{ \prod_{e\in S} x_{f(e)}\}\{ \prod_{s \notin S} y_{f(e)}\} d^{|S|+2 \delta(T,S)}\\ \nonumber
S_{1}&=&\sum_{S:1 \subset G_{i-1} }\{ \prod_{e\in S} x_{f(e)}\}\{ \prod_{s \notin S} y_{f(e)}\} d^{|S|+2 k<S>} \\ \nonumber
S_{2}&=&\sum_{S:2 \subset G_{i-1} }\{ \prod_{e\in S} x_{f(e)}\}\{ \prod_{s \notin S} y_{f(e)}\} d^{|S|+2 k<S>} \nonumber
\end{eqnarray}

$$W(G_{i})=d^{k<G_{i}>-|V(G_{i})|} \begin{pmatrix} S_{1} & S_{2}\end{pmatrix} \begin{pmatrix} a_{1,1}&a_{1,2}\\ a_{2,1}& a_{2,2}\end{pmatrix} \begin{pmatrix}1\\1\end{pmatrix}$$

 Proceed recursively  to 

$$W(G_{i})=d^{k<G_{i}>-|V(G_{i})|}\begin{pmatrix}S_{1}&S_{2}\end{pmatrix}  \begin{pmatrix} a_{1,1}&a_{1,2}\\ a_{2,1}& a_{2,2}\end{pmatrix}^{i} \begin{pmatrix}1\\1\end{pmatrix}$$
where $S_{1}$ and $S_{2}$ are the contributions to the W-polynomial of the graph G (before any surgeries are performed) where vertices $u$ and $v$ are in the same receptively different components.

\begin{lem}
In the above situation $a_{2,1}=0$ and $a_{2,2}=a_{1,1}+a_{1,2}*d^{2}$.
\end{lem}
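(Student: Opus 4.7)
The plan is to dispatch the two claims separately, both by going back to the definitions of the $a_{i,j}$ and counting connected components.

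For $a_{2,1}=0$ the observation is almost immediate. Adding the edge set of a subgraph $S\subset C$ to a subgraph $T$ of $G_{i-1}$ can only merge components; it can never separate them. If $T:2$, so that $u$ and $v$ already lie in a common component of $T$, then $u$ and $v$ remain in the same component of $T\cup S$ for every choice of $S$. Hence the state $(T\cup S):1$ never occurs when $T:2$, the indexing set of the sum defining $a_{2,1}$ is empty, and so $a_{2,1}=0$.

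For the identity $a_{2,2}=a_{1,1}+d^2 a_{1,2}$, I will reorganize each $a_{i,j}$ by partitioning the subgraphs $S\subset C$ according to the state of $S$ alone: write $S:\alpha$ if $u,v$ share a component in $S$, and $S:\beta$ otherwise. A moment's thought shows that when $T:1$, the state of $T\cup S$ is $1$ iff $S:\beta$, so $a_{1,1}$ is summed over $S:\beta$ and $a_{1,2}$ over $S:\alpha$; when $T:2$, every $S\subset C$ contributes to $a_{2,2}$. The next step is to read off $\delta(T,S)$ in each case by direct component counting: $\delta(T,S)=k(S)-2$ whenever $T:1$, as well as when $T:2$ and $S:\beta$, but $\delta(T,S)=k(S)-1$ when $T:2$ and $S:\alpha$, because then the single $(u,v)$-component of $T$ absorbs only one extra component of $S$, not two. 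Splitting the sum for $a_{2,2}$ along $S:\alpha$ versus $S:\beta$ and pulling the extra factor $d^{2}$ out of the $S:\alpha$ sub-sum recovers $a_{1,1}+d^2 a_{1,2}$ term by term, because the remaining weights $\prod_{e\in S} x_{f(e)}\prod_{e\notin S} y_{f(e)}\, d^{|S|}$ match exactly.

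The only real content is the careful $\delta$-count in the $T:2$ case: the two subcases $S:\alpha$ and $S:\beta$ carry different powers of $d$, and it is precisely that discrepancy which produces the $d^{2}$ coefficient on $a_{1,2}$. Everything else is bookkeeping, and no extra identity beyond Lemma \ref{delta} is needed.
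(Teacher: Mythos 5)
Your proof is correct and follows the same basic strategy as the paper's: observe that edge additions only merge components (so $a_{2,1}=0$ and the second row of the matrix sums over all $S\subset C$), and then account for the discrepancy in the exponent of $d$ via the component count $\delta$. The paper's own proof is a two-sentence sketch that simply cites Lemma \ref{delta}, so your version is essentially an expansion of it. One substantive point in your favor: your $\delta$-count is the one actually needed for the identity to hold, and it quietly corrects a literal reading of Lemma \ref{delta}. As stated there, the case ``$T:2$ and $(T\cup W):2$'' is assigned $\delta = k(W)-1$ uniformly; but when $T:2$ every $W$ lands in that case, and if $u,v$ lie in \emph{different} components of $W$ one gets $\delta = k(W)-2$, not $k(W)-1$ (e.g.\ $C$ a single edge, $W=\emptyset$, $T$ connected: $\delta=0=k(W)-2$). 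Taking the paper's fourth case at face value would give $a_{2,2}=d^{2}(a_{1,1}+a_{1,2})$, contradicting the lemma; your split of the $T:2$ row by the state of $S$ alone (your $\alpha$ versus $\beta$), with the extra factor $d^{2}$ appearing only on the $\alpha$ part, is the right bookkeeping and matches the paper's subsequent computations (e.g.\ $a_{1,1}=y_{s_{2}}$, $a_{1,2}=d^{-1}x_{s_{2}}$ in Example 1). So: correct, same route, with a useful clarification of the supporting lemma.
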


\begin{proof}
After adding a subgraph of C to a subgraph of $G_{i-1}$ $u$ and $v$ are either in the same component or in different components. Thus if we sum along any row of $a$ we sum over all subgraphs of C. This observation along with Lemma \ref{delta} completes the proof.
\end{proof}

\begin{lem}
$$\begin{pmatrix}a_{1,1}&a_{1,2}\\0&a_{1,1}+d^2\ a_{1,2}\end{pmatrix}^{n} =\begin{pmatrix} a_{1,1}^{n} & d^{-2} \sum_{j=1}^{n} {n\choose j} a_{1,1}^{n-j}\ (a_{1,2} d^{2})^{j} \\0&(a_{1,1}+d^{2}\ a_{1,2})^{n}\end{pmatrix}$$
\end{lem}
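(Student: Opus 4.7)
The plan is to exploit the upper-triangular structure of
$M=\begin{pmatrix}a_{1,1}&a_{1,2}\\0&a_{1,1}+d^{2}a_{1,2}\end{pmatrix}$
and reduce the claim to the binomial theorem. Set $a=a_{1,1}$, $b=a_{1,2}$, and $c=a_{1,1}+d^{2}a_{1,2}$; the key algebraic feature of these entries is the identity $c-a=d^{2}b$, which is what will let the final answer collapse to a clean closed form.

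First I would verify, either by a one-line induction on $n$ or by diagonalising $M$, that every power of an upper-triangular $2\times 2$ matrix retains the form
$$M^{n}=\begin{pmatrix} a^{n} & b_{n}\\ 0 & c^{n}\end{pmatrix},$$
where $b_{n}$ obeys the recursion $b_{n+1}=a^{n}b+c\,b_{n}$ with $b_{0}=0$. Unwinding this recursion gives the telescoping/geometric sum
$$b_{n}=b\sum_{k=0}^{n-1}a^{n-1-k}c^{k}=b\cdot\frac{c^{n}-a^{n}}{c-a},$$
valid generically and in any case valid as a polynomial identity in $a,b,c$ after clearing denominators.

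Next I would substitute the identity $c-a=d^{2}b$, which turns the expression for $b_{n}$ into the variable-free form
$$b_{n}=\frac{c^{n}-a^{n}}{d^{2}}.$$
Expanding $c^{n}=(a+d^{2}b)^{n}$ by the binomial theorem and subtracting the $j=0$ term $a^{n}$ produces
$$b_{n}=d^{-2}\sum_{j=1}^{n}\binom{n}{j}a^{n-j}(d^{2}b)^{j},$$
which, after renaming back to $a_{1,1}$ and $a_{1,2}$, is precisely the upper right entry claimed in the lemma. The diagonal entries $a^{n}$ and $c^{n}=(a_{1,1}+d^{2}a_{1,2})^{n}$ are automatic, and the $(2,1)$ entry remains zero throughout.

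There is no real obstacle here; the only subtlety is bookkeeping in the binomial expansion and being careful that the $c-a=d^{2}b$ cancellation is what kills the apparent $1/b$ in the formula for $b_{n}$, so that the final answer is polynomial in the entries. For readers preferring a purely inductive verification, one can bypass the closed-form step and check the stated $n$-th power directly: the base case $n=1$ gives the single term $j=1$ with value $a_{1,2}$, and the inductive step is a one-line computation multiplying the conjectured $M^{n}$ by $M$ and using Pascal's identity $\binom{n}{j-1}+\binom{n}{j}=\binom{n+1}{j}$ to absorb the new contributions.
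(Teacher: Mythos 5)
Your argument is correct, and it is worth noting that the paper's entire proof of this lemma is the single line ``Proceed by induction,'' so your closing paragraph (direct induction on $n$, multiplying the conjectured $M^{n}$ by $M$ and absorbing terms via Pascal's identity $\binom{n}{j-1}+\binom{n}{j}=\binom{n+1}{j}$) is exactly the argument the paper intends, just actually carried out. Your primary route is genuinely different and arguably more informative: by solving the recursion $b_{n+1}=a^{n}b+cb_{n}$ to get $b_{n}=b(c^{n}-a^{n})/(c-a)$ and then invoking the structural identity $c-a=d^{2}b$ (which is special to these entries, $c=a_{1,1}+d^{2}a_{1,2}$), you explain \emph{why} the upper-right entry takes the binomial form $d^{-2}\sum_{j\ge 1}\binom{n}{j}a^{n-j}(d^{2}b)^{j}=d^{-2}\bigl((a+d^{2}b)^{n}-a^{n}\bigr)$ rather than merely verifying it. This closed form $d^{-2}\bigl(\lambda_{2}^{n}-\lambda_{1}^{n}\bigr)$ is in fact the cleaner way to see the two-eigenvalue structure that Theorem \ref{KBform} and the later Mahler-measure analysis rely on, so your derivation buys a little extra insight at no cost; the induction buys brevity. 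Both are complete proofs.
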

\begin{proof}
Proceed by induction
\end{proof}

 \begin{thm} \label{KBform}
 After n rational surgeries as described above
 
 $$W(G_{n})=d^{k<G_{n}>-|V(G_{n})|} ( a_{1,1}^n (S_{1}(1-d^{-2}))+(a_{1,1}+d^{2} a_{1,2})^{n}(S_{1}*d^{-2}+S_{2}))$$
 \end{thm}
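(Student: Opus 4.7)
The plan is to combine the matrix formula for $W(G_n)$ established just before the theorem with the evaluations of the matrix entries given by the two preceding lemmas, and then simplify by a binomial-theorem observation. The starting point is the identity
$$W(G_{n})=d^{k\langle G_{n}\rangle-|V(G_{n})|}\begin{pmatrix}S_{1}&S_{2}\end{pmatrix}\begin{pmatrix} a_{1,1}&a_{1,2}\\ a_{2,1}& a_{2,2}\end{pmatrix}^{n}\begin{pmatrix}1\\1\end{pmatrix}.$$
Lemma 3 tells me $a_{2,1}=0$ and $a_{2,2}=a_{1,1}+d^{2}a_{1,2}$, so the $2\times2$ matrix is upper triangular, and Lemma 4 then supplies the closed form for its $n$th power.

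Next I would multiply the $n$th power matrix on the right by $\binom{1}{1}$ to get a $2$-vector whose entries are $a_{1,1}^{n}+d^{-2}\sum_{j=1}^{n}\binom{n}{j}a_{1,1}^{n-j}(a_{1,2}d^{2})^{j}$ and $(a_{1,1}+d^{2}a_{1,2})^{n}$. The key observation is that the binomial theorem gives $(a_{1,1}+d^{2}a_{1,2})^{n}=a_{1,1}^{n}+\sum_{j=1}^{n}\binom{n}{j}a_{1,1}^{n-j}(a_{1,2}d^{2})^{j}$, so the first entry rewrites cleanly as
$$a_{1,1}^{n}(1-d^{-2})+d^{-2}(a_{1,1}+d^{2}a_{1,2})^{n}.$$
Pairing with $(S_{1}\ S_{2})$ on the left and collecting the coefficients of $a_{1,1}^{n}$ and $(a_{1,1}+d^{2}a_{1,2})^{n}$ separately then yields $a_{1,1}^{n}\,S_{1}(1-d^{-2})+(a_{1,1}+d^{2}a_{1,2})^{n}(S_{1}d^{-2}+S_{2})$, which is exactly the stated formula after restoring the prefactor $d^{k\langle G_{n}\rangle-|V(G_{n})|}$.

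There is no real obstacle; the work is entirely bookkeeping. The one place to be careful is the step that converts the sum $\sum_{j=1}^{n}\binom{n}{j}a_{1,1}^{n-j}(a_{1,2}d^{2})^{j}$ into $(a_{1,1}+d^{2}a_{1,2})^{n}-a_{1,1}^{n}$, since losing the $j=0$ term (or miscounting the $d^{-2}$) is the only place a sign or power could slip. Once that identity is in hand, the desired expression follows immediately.
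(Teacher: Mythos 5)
Your proposal is correct and follows essentially the same route as the paper: the paper's own proof is the one-line remark that the result ``follows from the previous lemma after a change of basis,'' and your computation -- applying Lemma 4's formula for the $n$th power of the upper-triangular matrix, recognizing $\sum_{j=1}^{n}\binom{n}{j}a_{1,1}^{n-j}(a_{1,2}d^{2})^{j}=(a_{1,1}+d^{2}a_{1,2})^{n}-a_{1,1}^{n}$, and collecting coefficients of $a_{1,1}^{n}$ and $(a_{1,1}+d^{2}a_{1,2})^{n}$ -- is exactly the bookkeeping that remark elides. The details check out, including the $d^{-2}$ factor and the restored prefactor $d^{k\langle G_{n}\rangle-|V(G_{n})|}$.
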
 
\begin{proof}
The proof follows from the previous lemma after a change of basis.
\end{proof}

\begin{Ex} \end{Ex}
A twist link with $2n$ crossings can be constructed by repeatedly performing -1/2 surgery as indicated in the Figure \ref{fig: rptgrp1}. This corresponds to adding a sheaf of length two repeatedly to a pair of vertices. 

\begin{figure}[h]
\begin{center}
\includegraphics{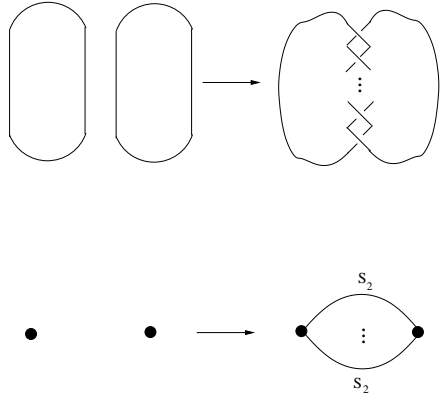}
\end{center}
\caption{Repeated graph additions} \label{fig: rptgrp1}
\end{figure}

In this case the graph G is two vertices with no edges. We see that there are no subgraphs of G in which the two vertices are connected so, $S_{1}=d$ and $S_{2}=0$. Then $a_{1,1}=y_{s_{2}}$ and $a_{1,2}=d^{-1} x_{s_{2}}$.  Let $G_{n}$ be the graph with two vertices and n edges, each a sheaf of length 2. Then

$$W(G_{n})(d,d,d)=d^{-2}(y_{s_{2}}^{n}(d^{2}-1)+(y_{s_{2}}+d x_{s_{2}})^{n}$$
 Evaluating the colors of the edges as in Theorem 1 we find the Kauffman bracket of the twist link with $2n$ crossings $T_{2n}$ is
$$<T_{2n}>=d^{-2}(A^{-2n}(A^{4}+1+A^{-4})+(-1)^{n}(A^{-3n})$$

\begin{Ex} \end{Ex}

Any Montesinos link where each rational tangle is the same can be seen as the result of some number of repeated rational surgeries on two unlinked simple closed components. As such its Kauffman bracket will have the form
$$d^{b}(a_{1,1}^{n}(A^{4}+1+A^{-4})+(a_{1,1} +d^{2} a_{1,2})^{n})$$

In particular if we consider a pretzel link then $a_{1,1}=y_{c_{n}}$ and $a_{1,2}=d^{-1} x_{c_{n}}$. The pretzel link $(n,n, \ldots,n)$ with m twist equivalence classes will have Kauffman bracket 

$$ <P_{m,n}>=d^{-2}( (\frac{(-A^{-3})^{n}-A^{n}}{d})^{m}(A^{4}+1+A^{-4})+(\frac{(-A^{-3})^{n}-A^{n}}{d} +d A^{n})^{m})$$

\begin{figure}[h]
\begin{center}
\includegraphics{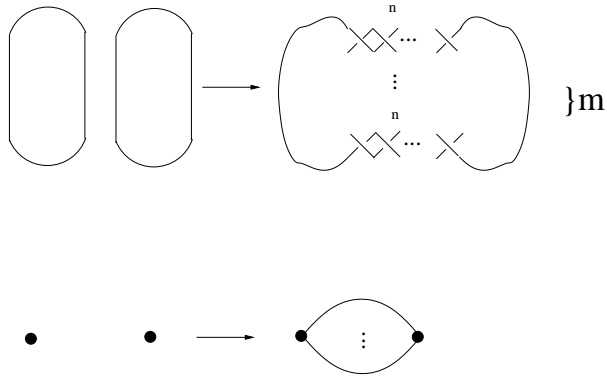}
\end{center}
\caption{Repeated graph additions} \label{fig: rptgrp}
\end{figure}

\section{Euclidean Mahler measure and repeated rational surgeries}

 In \cite{Twisting} and \cite{silver}, the (Euclidean) Mahler Measure of the Jones polynomial was investigated under twisting, while in \cite{silver_williams} the Mahler measure of the multivariable Alexander polynomial was the subject. There are also several attempts such as \cite{zeros}, \cite{chang_schrock} to find the locus of zeros for infinite families of knots and links. Also in the physics community there are many attempts to find the partition function of Potts model spin systems in an when trying to understand phase changes in the thermodynamic limit. Here some of those techniques are applied to investigate the behavior of the Mahler measure under repeated  rational surgeries. 
 
 Recall that the Mahler measure of a polynomial $f(z):=a_{0}z^{k}\prod_{i=1}^{n}(a_{i}-z)$ can be found by

$$M(f)=|a_{0}|\prod_{i=1}^{n}max(1,|a_{i}|)$$
 
 The following theorem is concerned with precisely understanding the locus of zeros for an infinite family of polynomials. It was originally proven in \cite{beraha_kahane_weiss}, and extended in \cite{sokal_1}. This statement of the theorem  comes from \cite{sokal_2} as this is its most convenient form for our current purpose.
 
 Consider a family of polynomials $P_{n}(z)=\sum_{i=1}^{k} a_{i}(z) \lambda_{i}(z)^{n}$, where each $a_{i}$ and $\lambda_{i}$ are analytic in D, some region in the complex plane. 
 Let $\mathcal{Z}(P_{n})=\{z \in \mathbb{C}\ s.t. P_{n}(z)=0\}$. We are interested in the locus of zeros for the whole family. With that in mind let
 \newline
 
  \textit{lim inf}($\mathcal{Z}(P))$= \{z $\in$ D s.t. every neighborhood U of $z$ has nonempty intersection with all but finitely many of the sets  $\mathcal{Z}(P_{n}))$\}
  \newline
  
  \textit{lim sup}($\mathcal{Z}(P))$=\{ z $\in$ D s.t. every neighborhood of $z$ has nonempty intersection with infinitely many sets $\mathcal{Z}(P_{n})$\}
 \newline
 
 Call an index k dominant at $z$ if $ |\lambda_{k}(z)| \geq |\lambda_{l}|,\ l=1\ldots k$.
 
  Let $D_{k} =\{z\ s.t. \lambda_{k} \text{ is dominant at z}\}$.
 
 \begin{thm}(\cite{beraha_kahane_weiss} ,\cite{sokal_1},\cite{sokal_2}) 
 
Let D be a domain in $\mathbb{C}$ and let $a_{1},\dots,a_{k}$ and $\lambda_{1}, \dots,\lambda_{k}$ be analytic functions in D, with  $k \geq 2$. Assume also that none of these functions are identically zero, and that there is no pair of indices p and q such that 
 $\lambda_{p}=\omega \lambda_{q}$ for some constant with $|\omega|=1$ and 
 $D_{p}(=D_{q})$ is nonempty. Then
 \newline
 
 \textit{lim inf}($\mathcal{Z}(P))$= \textit{lim sup}($\mathcal{Z}(P)$) and a point z lies in this set if and only if
 \newline
 
 a) there is a unique dominant index p at z and $a_{p}(z)=0$ or
 \newline
 
 b) there are two or more dominant indices at z
 \label{eqcrv}
\end{thm}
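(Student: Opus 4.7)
The plan is to prove two inclusions whose combination yields both the equality of \textit{lim inf} and \textit{lim sup} and the stated characterization. Since $\textit{lim inf}(\mathcal{Z}(P)) \subseteq \textit{lim sup}(\mathcal{Z}(P))$ holds trivially, it suffices to show: (I) if neither (a) nor (b) holds at $z_0$, then $z_0 \notin \textit{lim sup}(\mathcal{Z}(P))$; and (II) if (a) or (b) holds at $z_0$, then $z_0 \in \textit{lim inf}(\mathcal{Z}(P))$.

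Part (I) is a direct estimate. Let $p$ be the unique dominant index at $z_0$ with $a_p(z_0) \neq 0$. By continuity there is a closed disk $\overline{B}$ about $z_0$ on which $|\lambda_i(z)/\lambda_p(z)| \leq \rho < 1$ for every $i \neq p$ and $|a_p(z)| \geq \delta > 0$. Dividing $P_n$ by $\lambda_p^n$ yields
$$\bigl|P_n(z)/\lambda_p(z)^n\bigr| \;\geq\; |a_p(z)| - \sum_{i \neq p} |a_i(z)|\,\rho^n \;\geq\; \delta/2$$
uniformly on $\overline{B}$ for large $n$, so $P_n$ has no zeros in $\overline{B}$ eventually. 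In case (a) of part (II) the same normalization $Q_n := P_n/\lambda_p^n$ converges uniformly to $a_p$ on $\overline{B}$; since $a_p$ is analytic, not identically zero, and vanishes at $z_0$, shrinking $\overline{B}$ so that $a_p$ is nonzero on $\partial B$ and applying Hurwitz's theorem produces a zero of $Q_n$, hence of $P_n$, in $B$ for every sufficiently large $n$. Letting the disk shrink gives $z_0 \in \textit{lim inf}(\mathcal{Z}(P))$.

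The real work is case (b) of part (II), and this is where I expect the main obstacle. Let $p_1,\dots,p_m$ with $m \geq 2$ be the dominant indices at $z_0$ and set $\mu_i(z) := \lambda_{p_i}(z)/\lambda_{p_1}(z)$, so $|\mu_i(z_0)| = 1$. Dividing by $\lambda_{p_1}^n$ on a small disk $\overline{B}$ gives $P_n/\lambda_{p_1}^n = R_n + \epsilon_n$, where $R_n(z) = \sum_{i=1}^m a_{p_i}(z)\,\mu_i(z)^n$ and $\epsilon_n \to 0$ uniformly by the strict dominance of $p_1,\dots,p_m$ over the remaining indices. I would first show that $R_n$ has zeros accumulating at $z_0$ via the argument principle on a circle $\gamma = \partial B$: partition $\gamma$ into finitely many arcs on which a single $\mu_i$ is strictly largest in modulus, approximate $R_n$ on each arc by its leading term $a_{p_i}(z)\mu_i(z)^n$, and estimate the change of $\arg R_n$ contributed by $\mu_i(z)^n$, which grows linearly in $n$. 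Summing yields a nontrivial winding number around $0$, hence zeros of $R_n$ inside $\gamma$; Rouch\'e's theorem comparing $R_n+\epsilon_n$ to $R_n$ then transfers the zeros to $P_n$, placing $z_0$ in $\textit{lim inf}(\mathcal{Z}(P))$. The non-degeneracy hypothesis that no $\lambda_p/\lambda_q$ is an identically unimodular constant is what ensures the dominance arcs form a proper partition of $\gamma$ and that the phases of the $\mu_i^n$ do not conspire to cancel the winding. The subtle bookkeeping in this argument-principle step---tracking how the dominance arcs depend on $n$ and controlling modulus and argument of each $\mu_i(z)^n$ simultaneously along $\gamma$---is where the hypothesis does the essential work.
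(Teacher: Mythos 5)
This theorem is not proved in the paper at all: it is the Beraha--Kahane--Weiss theorem, quoted (in Sokal's formulation) from the cited references, so there is no in-paper argument to compare against; your proposal must therefore stand on its own. Your part (I) and part (II)(a) are correct and standard: strict dominance together with $a_{p}(z_{0})\neq 0$ gives a uniform lower bound on $|P_{n}/\lambda_{p}^{n}|$ on a small closed disk, and in case (a) the normalized sequence converges locally uniformly to $a_{p}$, which is not identically zero on the connected domain $D$, so its zeros are isolated and Hurwitz applies. (Two small points you use implicitly and should record: a unique dominant index forces $\lambda_{p}(z_{0})\neq 0$, since otherwise every $\lambda_{i}(z_{0})=0$ and all indices would be dominant; and in case (b) the degenerate situation where every $\lambda_{i}(z_{0})=0$ defeats your normalization, but there $P_{n}(z_{0})=0$ for all $n$ and the conclusion is trivial.)

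The genuine gap is case (II)(b), which is the whole content of the theorem, and the winding-number scheme you sketch would not close it. Concretely: (1) at the endpoints of your dominance arcs two of the $\mu_{i}$ have equal modulus, the leading-term approximation to $\arg R_{n}$ breaks down there, and $R_{n}$ can pass arbitrarily close to (or through) $0$ on the contour $\gamma$ itself, so the argument principle cannot be set up on a contour chosen independently of $n$; (2) even on the good arcs the contribution is roughly $n$ times the increment of $\arg\mu_{i}$ along the arc plus an $O(1)$ term from $a_{p_{i}}$, and nothing forces these increments to be nonzero or of consistent sign, so you have no lower bound on the total winding number; (3) if some $a_{p_{i}}(z_{0})=0$, the ``leading term'' on the corresponding arc need not lead at all. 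The proofs in the literature use a different mechanism: for two dominant terms one solves $\mu(z)^{n}=-a_{p_{1}}(z)/a_{p_{2}}(z)$ directly, using that the non-degeneracy hypothesis forces $\mu$ to be non-constant and hence open, so $\mu(U)$ contains a disk about the unimodular value $\mu(z_{0})$, which for large $n$ contains an $n$-th root of the target (these roots have angular spacing $2\pi/n$ and modulus tending to $1$); a Rouch\'e perturbation then absorbs the subdominant terms, and the case of three or more dominant indices requires a separate reduction --- indeed this last case is delicate enough that the original Beraha--Kahane--Weiss argument for it had a gap later repaired by Sokal. That root-existence mechanism, rather than a global winding count over dominance arcs, is the missing idea.
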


Property (a) gives isolated points and property  (b) gives the equi-modular curves of a family of polynomials or isolated points.  Here we consider those points satisfying property (b).

\begin{cor}
  Consider a family of polynomials, $P_{n}(z)=a_{1}(z)\  \lambda_{1}(z)^{n}+a_{2}\ \lambda_{2}(z)^{n}$, where the above conditions are satisfied. Then the zeros of this family converge to some isolated points and to the equi-modular curve $\mathcal{B}$ where $|\lambda_{1}|=|\lambda_{2}|\neq 0$.
 \end{cor}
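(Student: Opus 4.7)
The plan is to deduce the corollary as a direct specialization of Theorem \ref{eqcrv} to the case $k=2$. Since the hypotheses of the general theorem are assumed to hold for the pair $(\lambda_1,\lambda_2)$, I can quote the conclusion verbatim: the common limit set $\text{lim inf}(\mathcal{Z}(P)) = \text{lim sup}(\mathcal{Z}(P))$ consists exactly of those $z$ satisfying (a) or (b). The work is then just to interpret (a) and (b) in the two-function setting.

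First I would handle case (a). At a point $z$ with a unique dominant index $p$, the condition $a_p(z)=0$ defines the zero set of an analytic function which is not identically zero (by hypothesis of Theorem \ref{eqcrv}); such a zero set is discrete in $D$, hence these contributions to the limit set are isolated points. Next I would treat case (b). For $k=2$, having two or more dominant indices at $z$ is precisely the condition $|\lambda_1(z)| = |\lambda_2(z)|$. This cuts out the equi-modular locus $\mathcal{B}$ in the statement.

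The only subtlety is the requirement $|\lambda_1(z)| = |\lambda_2(z)| \neq 0$ in the statement of the corollary (as opposed to merely $|\lambda_1(z)| = |\lambda_2(z)|$). I would address this by noting that if both $\lambda_1(z_0) = 0 = \lambda_2(z_0)$, then $z_0$ is a common zero of two analytic functions which are not identically zero, and such common zeros form a discrete (in fact isolated) subset of $D$. These finitely-many local exceptions can therefore be absorbed into the ``isolated points'' category rather than the curve $\mathcal{B}$, which justifies restricting $\mathcal{B}$ to the locus where the common modulus is nonzero.

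The main obstacle, and indeed the only one, is the bookkeeping between the two types of points in the limit set: isolated zeros coming from (a) and from the common vanishing in (b) on one side, and the one-real-dimensional equi-modular curve on the other. I expect no hard computation — the entire proof is a translation of Theorem \ref{eqcrv} with $k=2$, together with the elementary fact that the zero set of a nonzero analytic function on $D$ is discrete.
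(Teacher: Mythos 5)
Your proposal is correct and matches the paper's (implicit) argument: the corollary is stated as an immediate specialization of Theorem \ref{eqcrv} to $k=2$, with the preceding remark in the paper noting exactly that property (a) yields isolated points and property (b) yields the equi-modular curves. Your additional care in absorbing common zeros of $\lambda_1$ and $\lambda_2$ into the isolated points is a reasonable filling-in of detail the paper leaves unstated.
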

 
 \begin{thm}\label{zeros}
  Let $P_{n}(z)=a_{1}(z)\  \lambda_{1}(z)^{n}+a_{2}\ \lambda_{2}(z)^{n}$ be a family of polynomials that satisfy the requirement of the above theorem. Say that $\mathcal{B}$ is the curve where $|\lambda_{1}(z)|=|\lambda_{2}(z)|\neq 0$. If there is any non-isolated point of $\mathcal{B}$ that lies outside the unit circle then the euclidean Mahler measure of $P_{n}$ diverges as n goes to infinity. 
 \end{thm}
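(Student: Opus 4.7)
The plan is to use the Beraha--Kahane--Weiss corollary to extract arbitrarily many zeros of $P_n$ lying outside the unit circle, then invoke the elementary product formula
$M_e(f)=\prod_i\max(1,|\alpha_i|)$
to conclude that $M_e(P_n)$ blows up. No quantitative information about the density of zeros along $\mathcal{B}$ is needed, only the qualitative $\liminf=\limsup$ statement already in hand.

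First, since the non-isolated point $z_0\in\mathcal{B}$ has $|z_0|>1$, continuity of the modulus yields $\delta,\rho>0$ with $\overline{D(z_0,2\rho)}\subset\{|z|>1+\delta\}$. Because $z_0$ is non-isolated in $\mathcal{B}$, I would next select a sequence $z_1,z_2,\ldots\in\mathcal{B}\setminus\{z_0\}$ with $z_k\to z_0$, pass to a tail lying inside $D(z_0,\rho)$, and shrink radii so that the closed disks $\overline{D(z_k,r_k)}\subset D(z_0,2\rho)$ are pairwise disjoint. By the preceding corollary, every point of $\mathcal{B}$ lies in the common set $\liminf\mathcal{Z}(P)=\limsup\mathcal{Z}(P)$. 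Unwinding the definition of $\liminf$ applied to each $z_k$ separately produces an integer $N_k$ such that $D(z_k,r_k)$ contains at least one root of $P_n$ whenever $n\geq N_k$.

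Given any positive integer $K$, for $n\geq\max(N_1,\ldots,N_K)$ the polynomial $P_n$ therefore has $K$ distinct roots $\zeta_1,\ldots,\zeta_K$, one in each disjoint disk, and all satisfying $|\zeta_j|>1+\delta$. Consequently
$$M_e(P_n)\;\geq\;\prod_{j=1}^{K}|\zeta_j|\;\geq\;(1+\delta)^{K}.$$
Letting $K\to\infty$ (and enlarging $n$ accordingly) gives $M_e(P_n)\to\infty$, which is the claim.

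The argument is mostly bookkeeping; the one place to be careful, and the step I would flag as the main obstacle, is the opening maneuver of extracting a legitimate sequence $z_k\to z_0$ inside $\mathcal{B}\setminus\{z_0\}$ and getting pairwise disjoint disks around them that stay outside the unit circle. This is precisely what the hypothesis that $z_0$ is a non-isolated point of $\mathcal{B}$ buys us, together with the openness of the exterior of the unit disk; once the disks are set up, Theorem \ref{eqcrv} plants a root of $P_n$ in each of them for all large $n$, and the Euclidean Mahler measure lower bound is immediate.
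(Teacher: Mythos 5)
Your argument is correct and is essentially the paper's own proof: both use the non-isolated point of $\mathcal{B}$ outside the unit circle to produce arbitrarily many pairwise disjoint neighborhoods, each of which must contain a zero of $P_n$ for large $n$ by Theorem \ref{eqcrv}, and then apply the product formula for the Euclidean Mahler measure. The only (minor) difference is that by invoking the $\liminf$ characterization for all $K$ disks simultaneously you obtain divergence of the full sequence $M_e(P_n)$, whereas the paper contents itself with extracting a divergent subsequence.
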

 \begin{proof}
 Let $z_{1}$ be such a point on $\mathcal{B}$ and let $U_{1}$ be a neighborhood of $z_{1}$ that lies completely outside the unit circle, and let $m=min(|x|: x\in U_{1})$. For 
 $i \in \{2,3,\ldots \}$ choose $i$ points $z_{i,1},\dots, z_{i,i}$ on $\mathcal{B}$ and neighborhoods
  $U_{i,1},\ldots,U_{i,i}$ such that each neighborhood is disjoint and contained in $U_{1}$. Each of these points is a limit point of the zeros for this family of polynomials. Thus there is some first polynomial, $P_{n_{i}}(z)$ to have a zero in each of the neighborhoods 
 $U_{i,1},\dots,U_{i,i}$. Since each of these neighborhoods are disjoint $P_{n_{i}}(z)$ has $i$ distinct roots in $U_{1}$. The euclidian Mahler measure of $P_{n_{i}}(z)$ is then bounded below by $i*m_{1}$. We have constructed a subsequence of $\{P_{n}(z)\}$ whose euclidian Mahler measure diverges. The theorem follows.
 \end{proof}

 Next we review the methods developed in \cite{biggs} and extended in \cite{biggs-2} for finding 
 equi-modular curves for families of polynomials. In \cite{biggs} and \cite{biggs-2} the general case is considered.
 \newline
 
 Let $\lambda_{1}$ and $\lambda_{2}$ be polynomials with complex coefficients. Our goal is to find some description of the curve in the complex plane where they have the same modulus, $|\lambda_{1}(z)|=|\lambda_{2}(z)|$. If $z_{0}$ is such a point then $\lambda_{1}(z_{0})=s \lambda_{2}(z_{0})$ where $|s|=1$. Consider the matrix 
 
 $$T(z)=\begin{pmatrix}\lambda_{1}(z)&0\\0&\lambda_{2}(z)\end{pmatrix}.$$

 Then we see that $\lambda_{2}(z_{0})$ and $s\lambda_{2}(z_{0})$ are both roots of the characteristic equation of $T(z)$, 
 
 $$p(z)=det(\lambda I -T(z))=\lambda^{2}-(\lambda_{1}(z)+\lambda_{2}(z))\lambda+\lambda_{1}(z)\lambda_{2}(z).$$
 
 Consider $s$ as a formal variable and $\lambda_{2}(z)$ as a simultaneous root of $p(z)$ and $p(sz)$. We can investigate such simultaneous zeros by finding when the determinate of Sylvester's matrix for these two polynomials vanishes. That is find $z$ such that 
 
 $$det(\begin{pmatrix}
 s^{2}&s(-\lambda_{1}(z)-\lambda_{2}(z))&\lambda_{1}(z)\lambda_{2}(z)&0\\
 0&s^{2}&s^{2}(-\lambda_{1}(z)-\lambda_{2}(z))&\lambda_{1}(z)\lambda_{2}(z)&0\\
 1&1(-\lambda_{1}(z)-\lambda_{2}(z))&\lambda_{1}(z)\lambda_{2}(z)&0\\
 0&1&1(-\lambda_{1}(z)-\lambda_{2}(z))&\lambda_{1}(z)\lambda_{2}(z)\end{pmatrix}=0.$$
 
 Subtracting row 3 from row 1 and row 4 from row 2 one finds we should consider $z$ which are zeros of
 
 \begin{eqnarray}& &det(\begin{pmatrix}
  s^{2}-1&(s-1)(-\lambda_{1}(z)-\lambda_{2}(z))&0&0\\
  0& s^{2}-1&(s-1)(-\lambda_{1}(z)-\lambda_{2}(z))&0\\
  1&-\lambda_{1}(z)-\lambda_{2}(z)&\lambda_{1}(z)\lambda_{2}(z)&0\\
  0&1&-\lambda_{1}(z)-\lambda_{2}(z)&\lambda_{1}(z)\lambda_{2}(z)
  \end{pmatrix}) \nonumber \\ 
  &=&s (s-1)^{2} \lambda_{1}(z)\lambda_{2}(z)[-\lambda_{1}^{2}(z)-\lambda_{1}(z)\lambda_{2}(z)-\lambda_{2}^{2}(z)+t\lambda_{1}(z)\lambda_{2}(z)] \nonumber 
  \end{eqnarray}
  
  where $t=s+s^{-1}+2$. If we return to the situation where s is a unit modulus complex number, $s=cos(\theta)+isin(\theta)$, then $t=2 cos(\theta)+2=2(cos(2 \frac{\theta}{2})+1)=4 cos^{2}(\frac{\theta}{2})$. We exclude from consideration those points where one of $\lambda_{1}(z)$ or $\lambda_{2}(z)$ is zero. We have shown the following theorem holds. 
  
  \begin{thm} \label{V} A point $z_{0}$ is a point of equal modulus for $\lambda_{1}$ and  $\lambda_{2}$ and thus a limit point for the zeros of the family of polynomials $P_{n}(z)=a_{1}(z)\  \lambda_{1}(z)^{n}+a_{2}\ \lambda_{2}(z)^{n}$ if and only if for
  
  $$ v(t,z_{0})=-\lambda_{1}^{2}(z_{0})-\lambda_{1}(z_{0})\lambda_{2}(z_{0})-\lambda_{2}^{2}(z_{0})+t\lambda_{1}(z_{0})\lambda_{2}(z_{0})=0$$
  
  for some $t\in [0,4]$.   \end{thm}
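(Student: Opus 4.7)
The plan is to repackage the calculation carried out in the paragraphs immediately preceding the theorem as a chain of equivalences and then verify the parameter range for $t$. First I would observe that $|\lambda_1(z_0)|=|\lambda_2(z_0)|$ (assuming both are nonzero) is equivalent to the existence of a unit-modulus complex number $s$ with $\lambda_1(z_0)=s\,\lambda_2(z_0)$. The points where $\lambda_1(z_0)\lambda_2(z_0)=0$ are excluded by hypothesis (they are isolated and the theorem explicitly throws them out), so this normalization loses nothing.

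Next I would phrase the existence of such an $s$ as a common-root problem. Writing $p(\lambda)=\lambda^2-(\lambda_1+\lambda_2)\lambda+\lambda_1\lambda_2$, both $\lambda_2(z_0)$ and $s\lambda_2(z_0)=\lambda_1(z_0)$ are roots of $p$; equivalently, $\lambda_2(z_0)$ is a common root of $p(\lambda)$ and $p(s\lambda)$. A common root of two quadratics exists iff their Sylvester resultant vanishes, and this is precisely the $4\times 4$ determinant displayed in the paper. The row operations indicated (subtract row $3$ from row $1$ and row $4$ from row $2$) yield the factored expression
$$s(s-1)^2\,\lambda_1(z_0)\lambda_2(z_0)\bigl[-\lambda_1^2(z_0)-\lambda_1(z_0)\lambda_2(z_0)-\lambda_2^2(z_0)+t\lambda_1(z_0)\lambda_2(z_0)\bigr]$$
with $t=s+s^{-1}+2$. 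Since $\lambda_1(z_0)\lambda_2(z_0)\ne 0$ and $|s|=1$ forces $s\ne 0$, the resultant vanishes iff either $s=1$ or the bracketed quantity $v(t,z_0)$ vanishes.

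To close the equivalence I would handle the apparently special case $s=1$: it corresponds to $\lambda_1(z_0)=\lambda_2(z_0)$, in which case $v(t,z_0)=(t-3)\lambda_1(z_0)^2$, so $v(3,z_0)=0$ and $t=3\in[0,4]$ still witnesses the equi-modularity. Hence $|\lambda_1(z_0)|=|\lambda_2(z_0)|$ is equivalent to $v(t,z_0)=0$ for some value of $t$ arising from a unit-modulus $s$. Finally, with $s=e^{i\theta}$ one has $t=s+s^{-1}+2=2\cos\theta+2=4\cos^2(\theta/2)$, so as $\theta$ ranges over $[0,2\pi)$ the parameter $t$ traces exactly $[0,4]$; conversely any $t\in[0,4]$ comes from some $\theta$. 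Combined with Theorem \ref{eqcrv}, which identifies equi-modular points as the non-isolated limit points of $\mathcal{Z}(P_n)$, this completes the characterization.

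The only place that requires care is the bookkeeping around the factor $(s-1)^2$ and the excluded set $\{\lambda_1\lambda_2=0\}$; once one observes that the $s=1$ case is redundantly captured by $t=3$ and that $\lambda_1\lambda_2=0$ is excluded by hypothesis, the rest is a straightforward determinant computation and a trigonometric reparametrization.
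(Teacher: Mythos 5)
Your proposal follows essentially the same route as the paper: the paper's ``proof'' is precisely the preceding discussion (reduce equi-modularity to $\lambda_1(z_0)=s\lambda_2(z_0)$ with $|s|=1$, detect the common root of $p(\lambda)$ and $p(s\lambda)$ via the Sylvester resultant, perform the stated row operations to factor out $s(s-1)^2\lambda_1\lambda_2$, and reparametrize $t=s+s^{-1}+2=4\cos^2(\theta/2)\in[0,4]$ while excluding the zero set of $\lambda_1\lambda_2$), and you reproduce each of these steps. Your only addition is the explicit check that the degenerate factor $(s-1)^2$ does not break the equivalence, a point the paper passes over silently; this is a harmless refinement rather than a different argument.
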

  
  \begin{rmk} \label{Jacobian} \end{rmk}
  Think of $v(t,z)$ as a map from $\mathbb{R} \times \mathbb{R}^{2}$ to $\mathbb{R}^{2}$. If $v(t^{*},z_{0})=\bar{0}$ and $0 <t^{*}<4$, we want to ensure that $z_{0}$ is not an isolated point. Apply the inverse function theorem and check to see if the Jacobian of $v(t,z)$ with respect to the second two coordinates evaluated at $(t^{*},z_{0})$ is nonsingular.
 Through careful application of this theorem we can determine whether or not the equi-modular curve corresponding to a repeated rational surgery has any non-isolated points outside the unit circle. Then Theorem \ref{zeros} allows us to conclude that Mahler measure of the Kauffman bracket for this family of links  diverges as we add more rational tangles. 
 
 \section{Examples}
 
We will be investigating the zero locus of the Kauffman bracket for links found by repeated surgeries on a unlinked pair of simple closed components as in Figure
  \ref{fig: n_suregery} . It should be noted that the non-isolated points depend only on the rational tangle and not on the link on which we perform our surgery. The link on which we perform our surgery only contributes isolated points to the zero locus of the family of Kauffman brackets.
 
 \begin{figure}[h]
\begin{center}
\includegraphics{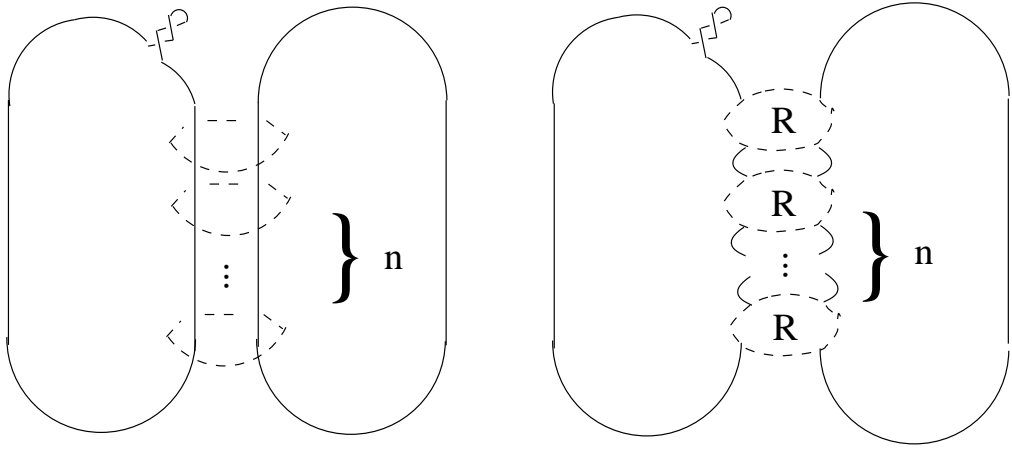}
\end{center}
\caption{repeated rational surgeries} \label{fig: n_suregery}
\end{figure}

 \begin{figure}[h]
\begin{center}
\includegraphics{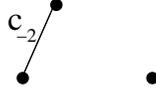}
\end{center}
\caption{The graph G} \label{fig: G}
\end{figure}

Using the notation of  Theorem \ref{KBform}, G is two vertices and no edges. Then $S_{1}=d$ and $S_{2}=0$. After performing n repeated rational surgeries we find that 

 $$W(G_{n})=d^{k<G_{n}>-|V(G_{n})|-1} [a_{1,1}^n (d-1)+(a_{1,1}+d^{2} a_{1,2})^{n}]$$
 
We can not immediately apply Theorem \ref{zeros} as our constituent functions may not be polynomials. However, we can utilize the Kauffman bracket by adding some nugatory crossings both to our two component unlink and to the rational tangle that we will repeatedly insert by surgery. To this end we will perform our rational surgeries on two component twisted unlink in Figure \ref{fig: n_suregery}. Then after each rational surgery perform some number of Reidemeister 1 moves. This process can be represented by repeated graph addition. The graph G to which we will perform the corresponding repeated graph addition is as in Figure \ref{fig: G}. 

It follows that $S_{1}=d^{5}A^{6}$ and $S_{2}=0$. Thus, neglecting powers of d, and making the substitutions as given in Theorem \ref{w2kb}

\begin{eqnarray} \nonumber
W(G_{n})&=&d^{k<G_{n}>-|V(G_{n})|} [a_{1,1}^n d^{5}*A^{6}(1-d^{-2})+(a_{1,1}+d^{2} a_{1,2})^{n}d^{3}A^{6}]\\ \nonumber
&=&d^{\Gamma} (a_{1,1}^n d^{2}A^{6}(1-d^{-2})+(a_{1,1}+d^{2} a_{1,2})^{n}A^{6})\\ \nonumber
&=&d^{\Gamma}(a_{1,1}^{n}(A^{10}+A^{6}+A^{2})+(a_{1,1}+a_{1,2}*d^{2})^{n} A^{6}) \\ \nonumber
&=&d^{\Gamma}A^{2}(a_{1,1}^{n}(A^{8}+A^{4}+1)+(a_{1,1}+a_{1,2}d^{2})^{n} A^{4})
\end{eqnarray}
 
\begin{Ex}\end{Ex}
 
 First we examine the case of performing a -1/2 surgery as in Example 1. There we find that $a_{1,1}=A^{-2}$ and $a_{1,1}+d^{2}a_{1,2}=(-1)^{n}A^{-3}$. We may expresses the twist link with 2n crossings as 
 
 $$<T_{2n}>=d^{\Gamma} (A^{-3n+2})(A^{n}(A^{8}+A^{4}+1)+A^{4})$$
 
 We may apply Theorem \ref{eqcrv} by excluding the origin and the roots of $d$ from our domain D.
 
Without the aid of Theorem \ref{V} we can see that the points where $A^{n}$ and 1 have equal modulus is the unit circle, and thus that is where the zeros of the family converge. This has been shown several times before including in \cite{zeros} and \cite{Twisting}.

\begin{Ex} \end{Ex}

Consider adding, by rational surgery, the  rational tangle $[-2,-1]$ and then performing several Reidemeister 1 moves.  This inserts a tangle as in Figure \ref{fig: shift_2_1}. The complete process can be viewed as  adding the graph in Figure \ref{fig: shift_2_1} to the appropriate pair of vertices in the graph G. Iterating this process then gives a family of links $\{T_{(n,[-2.-1])}\}$ where n is the number of graph additions performed. 

\begin{figure}[h]
\begin{center}
\includegraphics{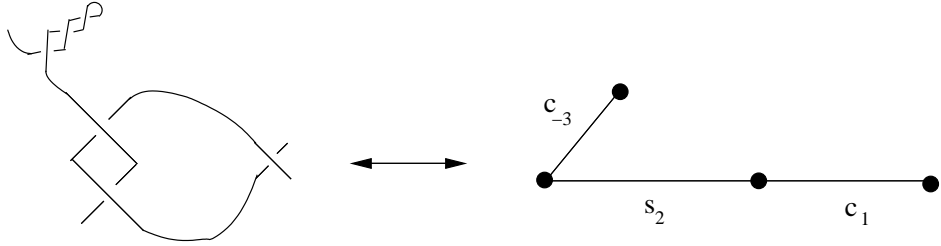}
\end{center}
\caption{The rational tangle and its graph} \label{fig: shift_2_1}
\end{figure}

By Theorem \ref{KBform} the Kauffman bracket is,

$$<T_{n,[-2,-1]}>=d^{\Gamma_{n}}A^{\Delta_{n}}
((A^{8}-A^{4}+1)^{n}(A^{8}+A^{4}+1)+(A^{12}-A^{8} -1)^{n}A^{4})$$

Interpreting this according to Theorem \ref{zeros} let 

\begin{eqnarray}a_{1}&=&A^{8}+A^{4}+1 \\ \nonumber
\lambda_{1}&=&A^{8}-A^{4}+1 \\ \nonumber
a_{2}&=&A^{4}\\ \nonumber
\lambda_{2}&=&A^{12}-A^{8} -1\nonumber
\end{eqnarray}

Then the family of zeros, $\{ \mathcal{Z}(<T_{n,[-2,-1]}>)\}$,  converges to some isolated points and the equi-modular curves defined by $|\lambda_{1}|=|\lambda_{2}|$. 

Theorem \ref{V} allows us to find points $z_{0}$  on the equi-modular curve by finding roots of 
$$v(t,A)=-(A^{12}-A^{4})^{2}+t(A^{8}-A^{4}+1)(A^{12}-A^{8} -1)$$
when $0 \leq t \leq4$. 

Thus the roots of $v(0,A)$, $v(4,A)$ and $v(5/4,A)$ are points on the equi-modular curve to which the zeros of the Kauffman bracket of this family of links converges. 

The zeros of $<T_{n,[-2,-1]}>$  for $n=10$ and $n=20$ are plotted  in Figure \ref{fig: crv_2-1}, along with the roots of $v(0,A)$, $v(4,A)$ and $v(5/4,A)$. Roots of $<T_{n,[-2,-1]}>$ are represented by open circles. Roots of $v(0,A)$, $v(4,A)$ and $v(5/4,A)$ are represented by a black square, a black diamond, and a solid dot respectively. One observes that the roots of $v(5/4,A)$ fall outside the unit circle. One can also show that these roots are not isolated points as noted in Remark \ref{Jacobian}. Thus by Theorem \ref{zeros} the sequence $\{M( <T_{n,[-2,-1]}>)\}$, the Mahler measure of the Kauffman brackets for links in this family, diverges!

\begin{Ex}\end{Ex}

The same analysis can be done for the rational surgeries whose corresponding shifted tangle is given in Figures \ref{fig: shift_2_2}, \ref{fig: 3_2}, \ref{fig: 3_3}, \ref{fig: 2_2_2}, and \ref{fig: 3_2_2}. The plots of zeros are  Figures \ref{fig: crv_2-2}, \ref{fig: crv_3-2}, \ref{fig: crv_3-3}, \ref{fig: crv_2-2-2}, and \ref{fig: crv_3-2-2} respectively. In each case it is a simple exercise to verify that there is a non-isolated point (in particular  of the zeros of $v(5/4,A)$) of the equi-modular curve. As such, for each of these families of links the Mahler measure diverges. Let $<T_{n,[a_{1}, \dots a_{m}]}>$ denote the Kauffman bracket of the link found by performing n rational $[a_{1}, \dots a_{m}]$ surgeries on  two unlinked components (shifted as in Figure \ref{fig: G}).
 
 \section{Twist number and Volume}
We would like to compare the behavior of Mahler measure and the hyperbolic volume of a link compliment under the repeated surgeries as described above. There are several results relating properties of link diagrams to the hyperbolic volume of its compliment. 
Consider a link $L$ in $S^{3}$. Say that a diagram $D$ of $L$ is \textit{twist reduced} if  any two twist equivalent crossings are connected by a chain of bi-gons. For a twist reduced diagram the number of edges in its corresponding graph as described in section \ref{wPolyTwistpoly} equals the twist number of the diagram.  We call a  diagram  A-adequate if the number of closed components found by smoothing each crossing positively is greater than the number of components found by smoothing all but one crossing positively and one negatively. Similarly we call a diagram B-adequate if the same statement holds, switching the roles of positive and negative. Call a diagram adequate if it is both A- and B-adequate. Futer, Kalfagianni and Purcell prove the following theorem in \cite{effi}.

 \begin{thm}
 Let K be a link in $S^{3}$ with a prime, twist reduced diagram D. Assume that the the twist number of D is greater than or equal to 2, and that every twist equivalence class has at least 3 crossings. Then
 $$\alpha(\frac{1}{3} tw(D)-1 )< vol(S^{3}\backslash K)$$
 
 for a constant $\alpha$.
  \end{thm}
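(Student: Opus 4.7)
The plan is to follow the strategy of Futer, Kalfagianni and Purcell, which combines an essential spanning surface construction with a Euler-characteristic lower bound on hyperbolic volume. The first step is to produce an essential surface inside the link complement $M = S^{3}\setminus K$ whose complexity is controlled by the combinatorics of the diagram $D$. I would form the state surface $S_{A}$ obtained from the all-$A$ Kauffman resolution of $D$: replace each circle in the all-$A$ state by a disk and attach a twisted band at each crossing. Under suitable adequacy hypotheses — which can be arranged from ``prime, twist-reduced, at least $3$ crossings per twist class'' by normalizing the diagram — the surface $S_{A}$ is incompressible and boundary-incompressible in $M$. This essentiality step is standard ribbon-graph/checkerboard-surface work and can be cited rather than reproved.

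Next, cut $M$ along $S_{A}$ and analyze the resulting $3$-manifold by the characteristic submanifold (JSJ) decomposition, separating it into an $I$-bundle part and a ``guts'' part $\mathrm{guts}(M\setminus S_{A})$ that carries a hyperbolic structure. The Agol--Storm--Thurston theorem then gives
$$\mathrm{vol}(M)\;\geq\;\tfrac{v_{oct}}{2}\,\bigl|\chi\bigl(\mathrm{guts}(M\setminus S_{A})\bigr)\bigr|,$$
where $v_{oct}$ is the volume of a regular ideal octahedron. Invoking this inequality reduces the problem to a purely combinatorial bound: I must show that the guts have Euler characteristic at least linear in $tw(D)$, specifically $|\chi|\geq \tfrac{1}{3}\,tw(D)-1$ (up to the choice of $\alpha$).

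For the combinatorial bound, the idea is to identify how each twist equivalence class of $D$ contributes to the decomposition of $M\setminus S_{A}$. A twist region contributes an $I$-bundle piece consisting of one ``essential product disk'' per crossing, except for the outermost crossings where bands meet the rest of the surface. Because $D$ is prime and twist-reduced, these $I$-bundle annuli and disks can be counted exactly; the hypothesis that every twist class has at least $3$ crossings guarantees that each twist class forces at least a bounded amount of non-$I$-bundle material to appear, yielding a contribution of order $tw(D)$ rather than $tw(D)$ being absorbed into the $I$-bundle. Summing the Euler-characteristic deficits across all twist classes gives the factor $\tfrac{1}{3}$ and the additive $-1$ absorbs boundary/primeness corrections.

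The main obstacle is this last combinatorial step: proving that the guts are genuinely large, i.e.\ carefully matching the $I$-bundle pieces of $M\setminus S_{A}$ with the twist regions so that no twist class is ``wasted'' in the $I$-bundle. This is where the hypotheses (prime, twist-reduced, at least $3$ crossings per class) are genuinely used, and it is the heart of the Futer--Kalfagianni--Purcell argument. Once this bound on $|\chi(\mathrm{guts})|$ is in hand, the theorem follows by setting $\alpha = v_{oct}/2$ (or an appropriate rescaling) and combining with the Agol--Storm--Thurston inequality.
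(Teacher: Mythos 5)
This theorem is not proved in the paper at all: it is quoted verbatim from Futer, Kalfagianni and Purcell \cite{effi} and used as a black box in Section 9, so there is no in-paper argument to compare yours against. Judged on its own terms, your proposal is a plan rather than a proof. The entire quantitative content of the statement is the lower bound $|\chi(\mathrm{guts}(M\setminus S_{A}))|\geq \frac{1}{3}tw(D)-1$ (up to the constant $\alpha$), and you explicitly defer exactly that step, calling it ``the heart of the Futer--Kalfagianni--Purcell argument.'' Nothing in the outline explains where the factor $\frac{1}{3}$ comes from, why three crossings per twist class is the threshold that prevents a twist region from being absorbed entirely into the $I$-bundle, or how primeness and twist-reducedness rule out essential product disks and annuli that would identify distinct twist regions and collapse the count. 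Without that counting argument the Agol--Storm--Thurston inequality yields nothing, so the proposal as written does not establish the theorem.

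It is also worth noting that the route you sketch is not the one taken in the cited source. In \cite{effi} the bound is obtained by augmenting the diagram with a crossing circle around each twist region, bounding the volume of the augmented link complement from below (this is where Agol--Storm--Thurston and checkerboard surfaces enter, and where primeness and twist-reducedness are used), and then recovering $K$ by Dehn filling the crossing circles while controlling the volume decrease with a quantitative Dehn-filling estimate. The hypothesis that each twist region carries enough crossings is precisely what makes the filling slopes long enough for that estimate to apply with a uniform constant; in your outline that hypothesis has no clearly assigned role. The guts-of-state-surfaces strategy you describe is the machinery of later Futer--Kalfagianni--Purcell work on adequate links and can be made to yield comparable bounds, but it is a genuinely different argument, and its combinatorial core would still have to be supplied before anything is proved.
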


Notice that if we repeatedly perform a $[-3,-3]$ rational surgery ( repeatedly adding the rational tangle in Figure \ref{fig: 3_3} with the nugatory crossings removed)  on a pair of unlinked components then each member of this family is prime, twist reduced and adequate (as the diagram is alternating). By the above theorem the volume diverges as the twist number diverges. Since the Mahler measure also diverges we can continue the analogy between their behaviors.

\section{Conclusion and Future Directions}

  In \cite{Twisting}, \cite{cyc}, \cite{silver} and \cite{silver_williams} conditions under which Mahler measure converges was investigated. First we were able to refine the geometric bound for the Mahler measure using the W-polynomial. Then we consider when Mahler measure diverges and developed techniques for studying the question.  One can think of these equi-modular curves as "belonging" to the associated rational tangle since the link on which we perform such surgeries does not contribute to those curves. Thorough investigation of such curves would be interesting. Also,  to applying these techniques to issues of convergence as in Example 3 would be interesting. If in fact it is true that convergence of the family of zeros to some isolated points and to the unit circle implies the convergence of the Mahler measure then understanding the isolated points may give some geometric insight into contributions to the Mahler measure. We hope to perform these investigations in the future. 

\begin{figure}[h!]
\begin{center}
\includegraphics{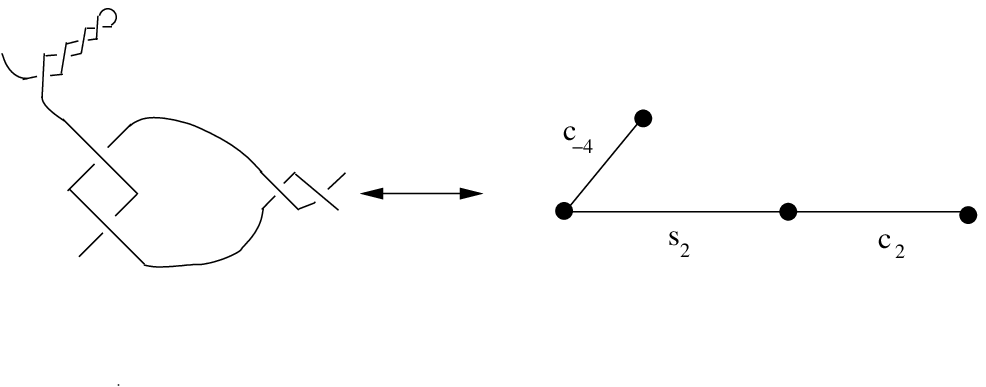}
\end{center}
\caption{Rational tangle [-2,-2] shifted} \label{fig: shift_2_2}
\end{figure}

\begin{figure}[h!]
\begin{center}
\includegraphics{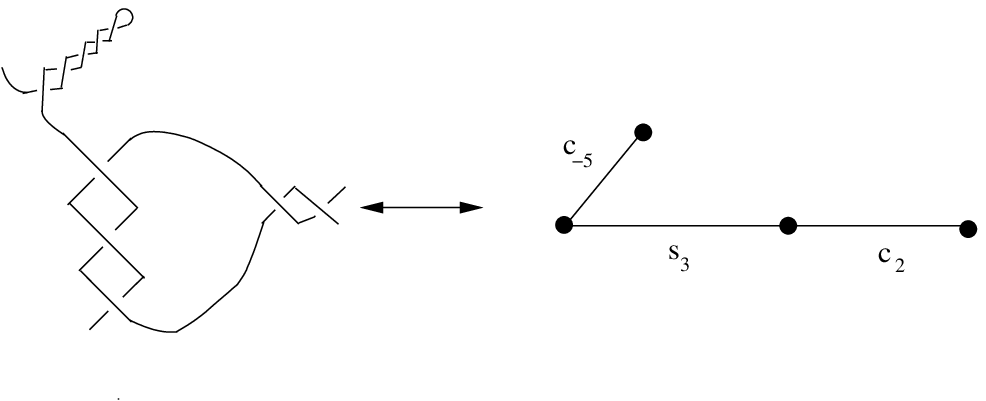}
\end{center}
\caption{Rational tangle [-3,-2] shifted} \label{fig: 3_2}
\end{figure}

\begin{figure}[h!] 
\begin{center}
\includegraphics{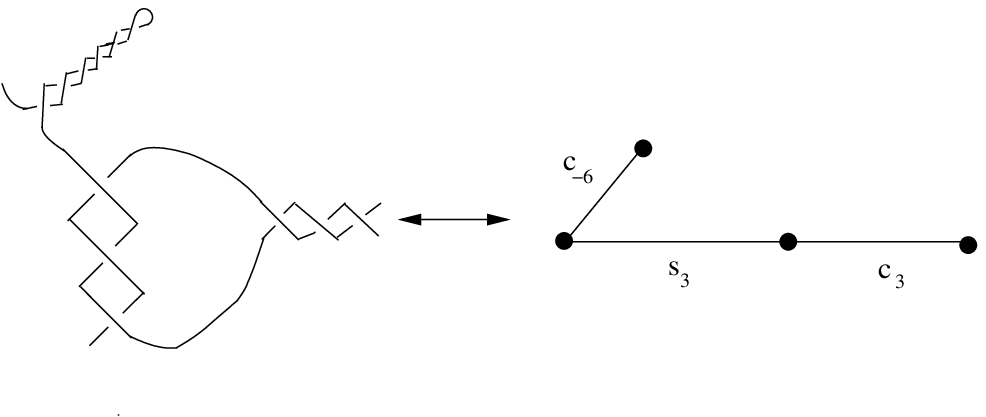}
\end{center}
\caption{Rational tangle [-3,-3] shifted} \label{fig: 3_3}
\end{figure}

\pagebreak 

\begin{figure}[h!]
\begin{center}
\includegraphics{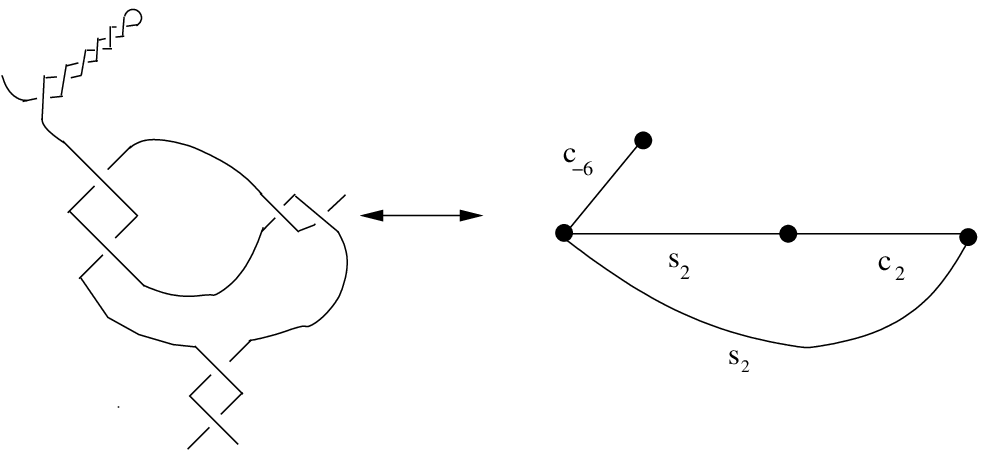}
\end{center}
\caption{Rational tangle [-2,-2.-2] shifted} \label{fig: 2_2_2}
\end{figure}

\begin{figure}[h!]
\begin{center}
\includegraphics{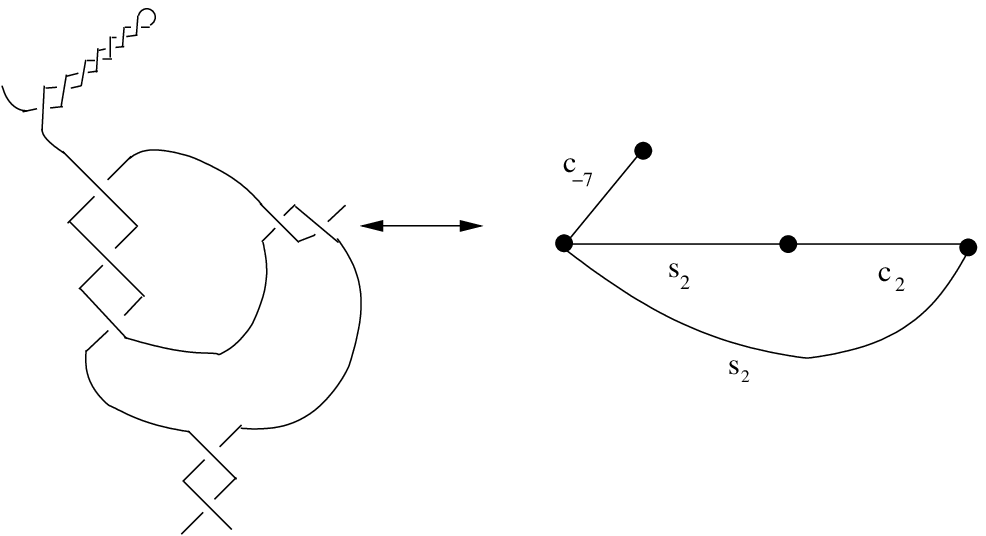}
\end{center}
\caption{Rational tangle [-3,-2,-3] shifted} \label{fig: 3_2_2}
\end{figure}

\pagebreak

\begin{figure}[h]
\hspace*{-.7in}\includegraphics{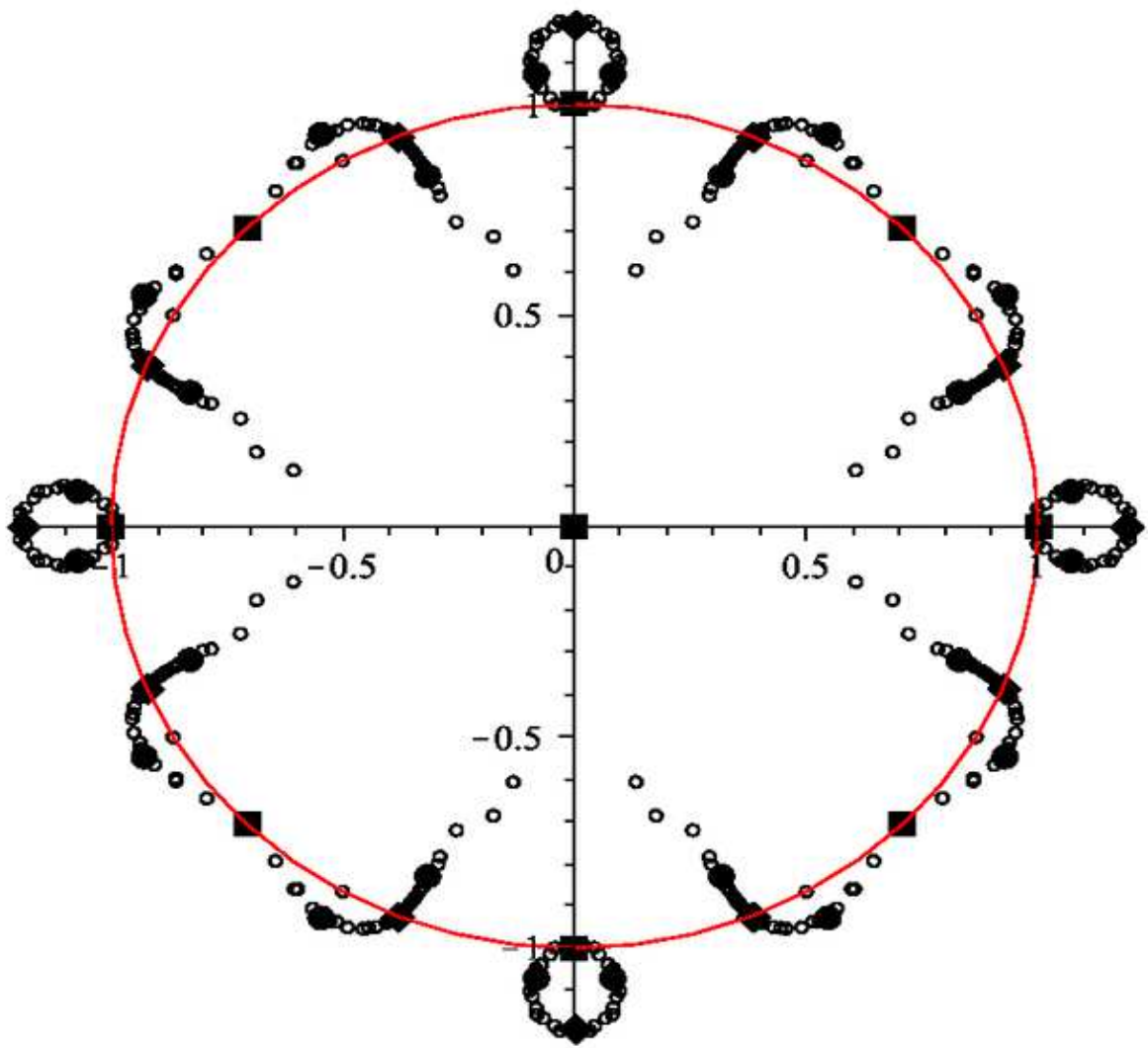}
\vspace*{-1.5in}
\caption{$v(5/4,black\ dot)=0$,$v(0,black\ square)=0$ and $v(4,black\ diamond)=0$. Roots of $<T_{10,[-2,-1]}>=0$ and $<T_{20,[-2,-1]}>=0$ are open circles. } \label{fig: crv_2-1}
\end{figure}

\begin{figure}[h]
\hspace*{-.7in} \includegraphics{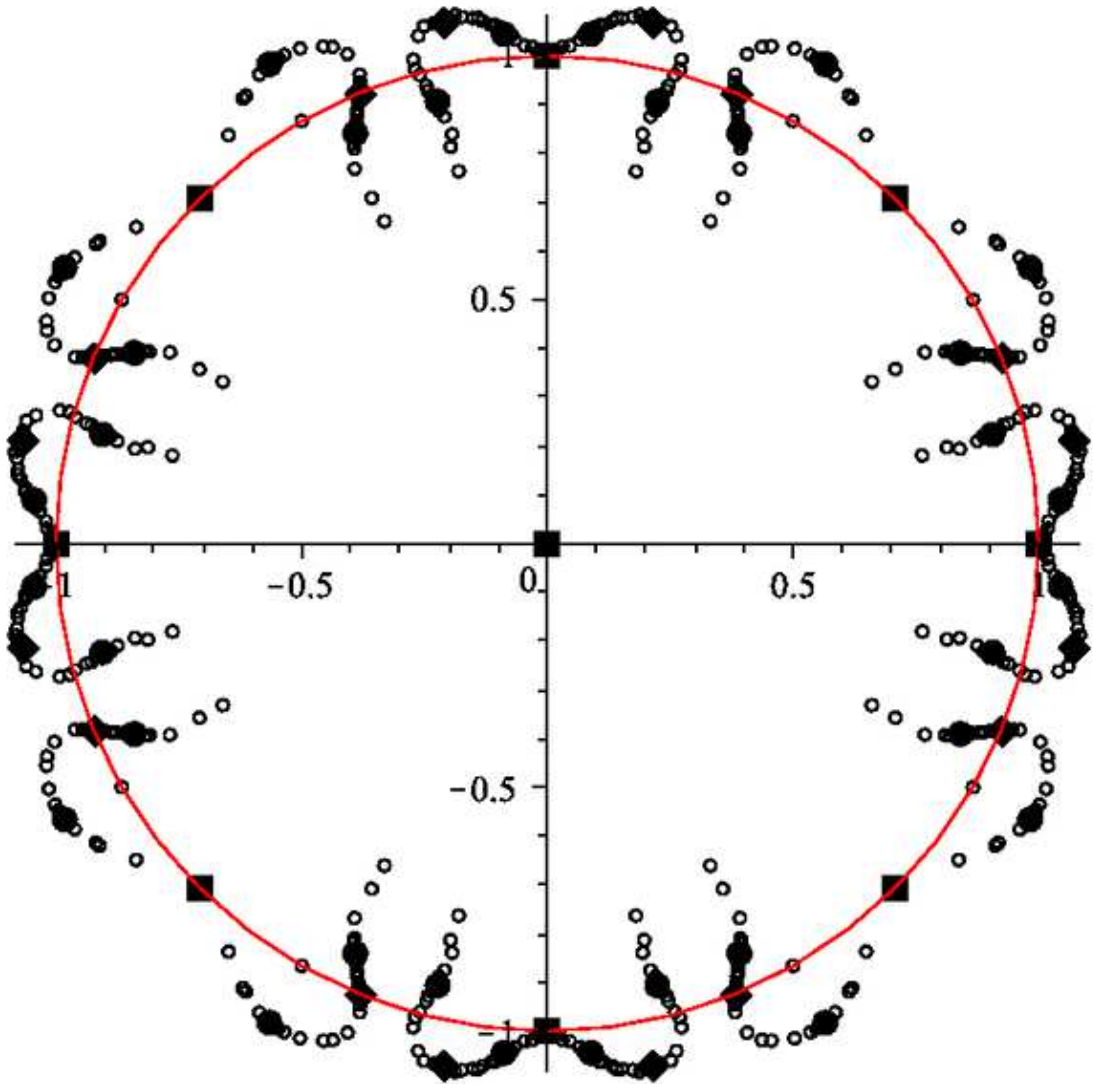}
\vspace*{-1.5in}
\caption{$v(5/4,black\ dot)=0$,$v(0,black\ square)=0$ and $v(4,black\ diamond)=0$. Roots of $<T_{10,[-2,-2]}>=0$ and $<T_{20,[-2,-2]}>=0$ are open circles. } \label{fig: crv_2-2}
\end{figure}

\begin{figure}[h]
\hspace*{-.7in} \includegraphics{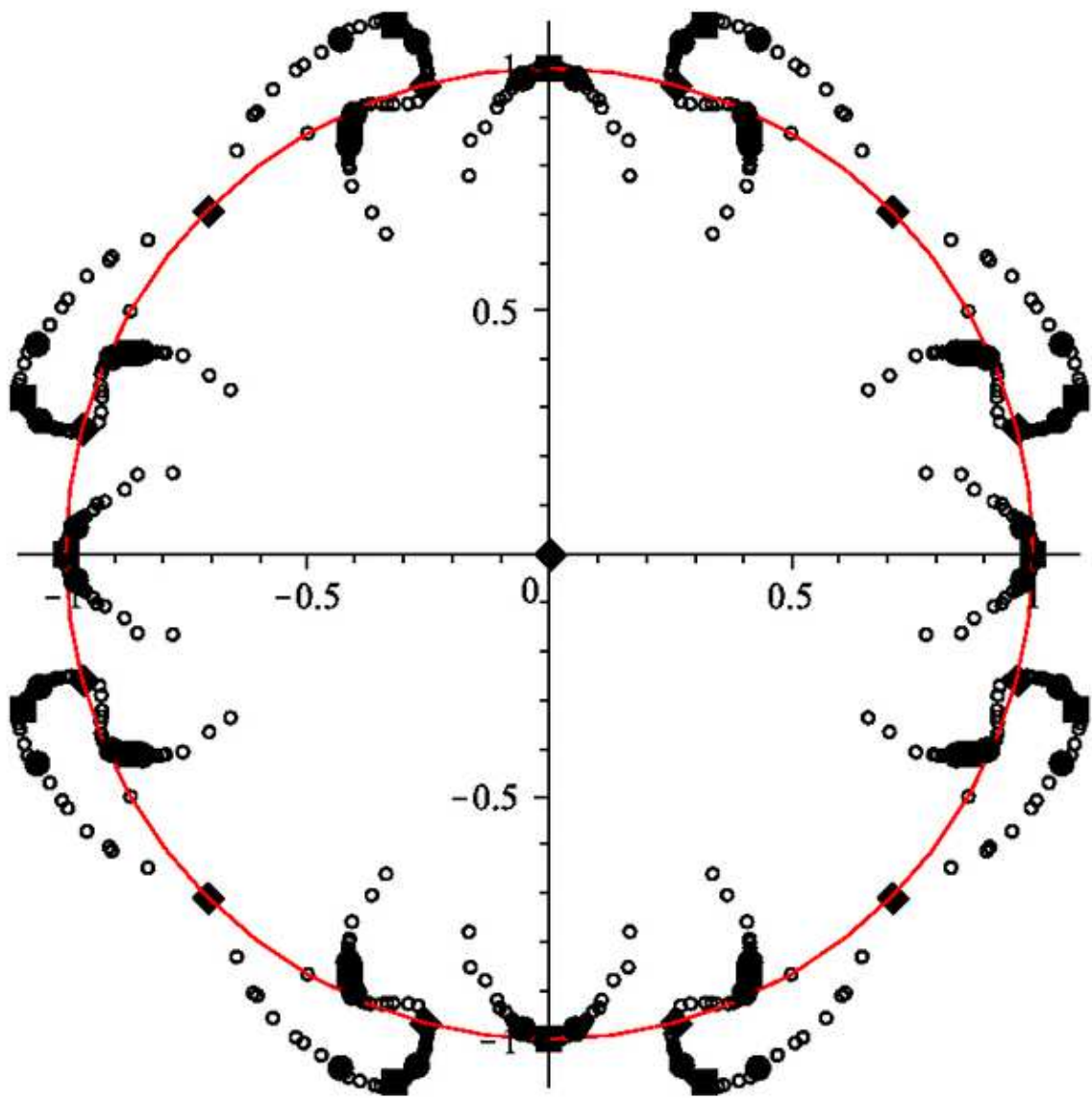}
\vspace*{-1.5in}
\caption{$v(5/4,black\ dot)=0$,$v(0,black\ square)=0$ and $v(4,black\ diamond)=0$. Roots of 
$<T_{10,[-3,-2]}>=0$ and $<T_{20,[-3,-2]}>=0$ are open circles.} \label{fig: crv_3-2}
\end{figure}

\begin{figure}[h]
\hspace*{-.5in} \includegraphics{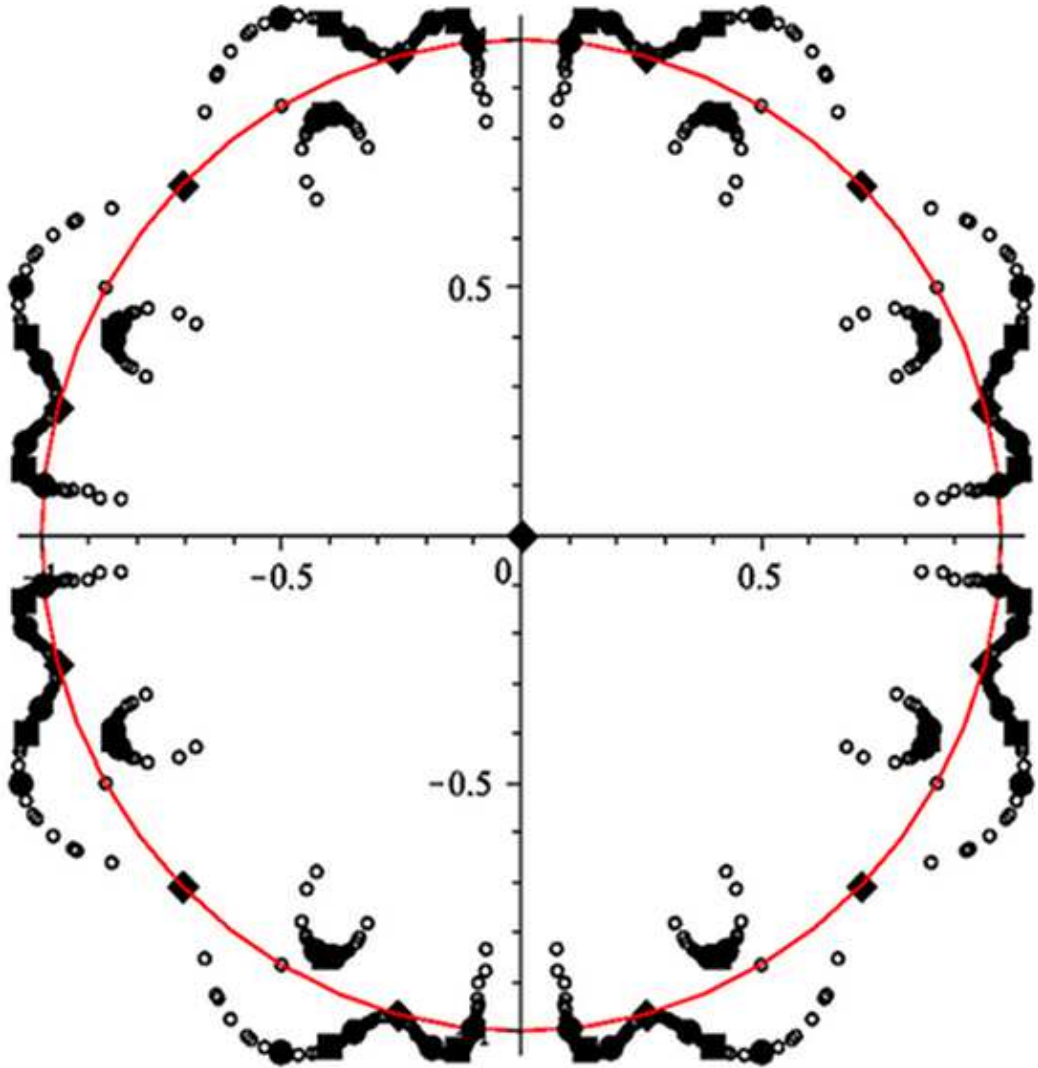}
\vspace*{-1.5in}
\caption{$v(5/4,black\ dot)=0$,$v(0,black\ square)=0$ and $v(4,black\ diamond)=0$. Roots of $<T_{10,[-3,-3]}>=0$ and $<T_{20,[-3,-3]}>=0$ are open circles. } \label{fig: crv_3-3}
\end{figure}

\begin{figure}[h]
\hspace*{-.7in} \includegraphics{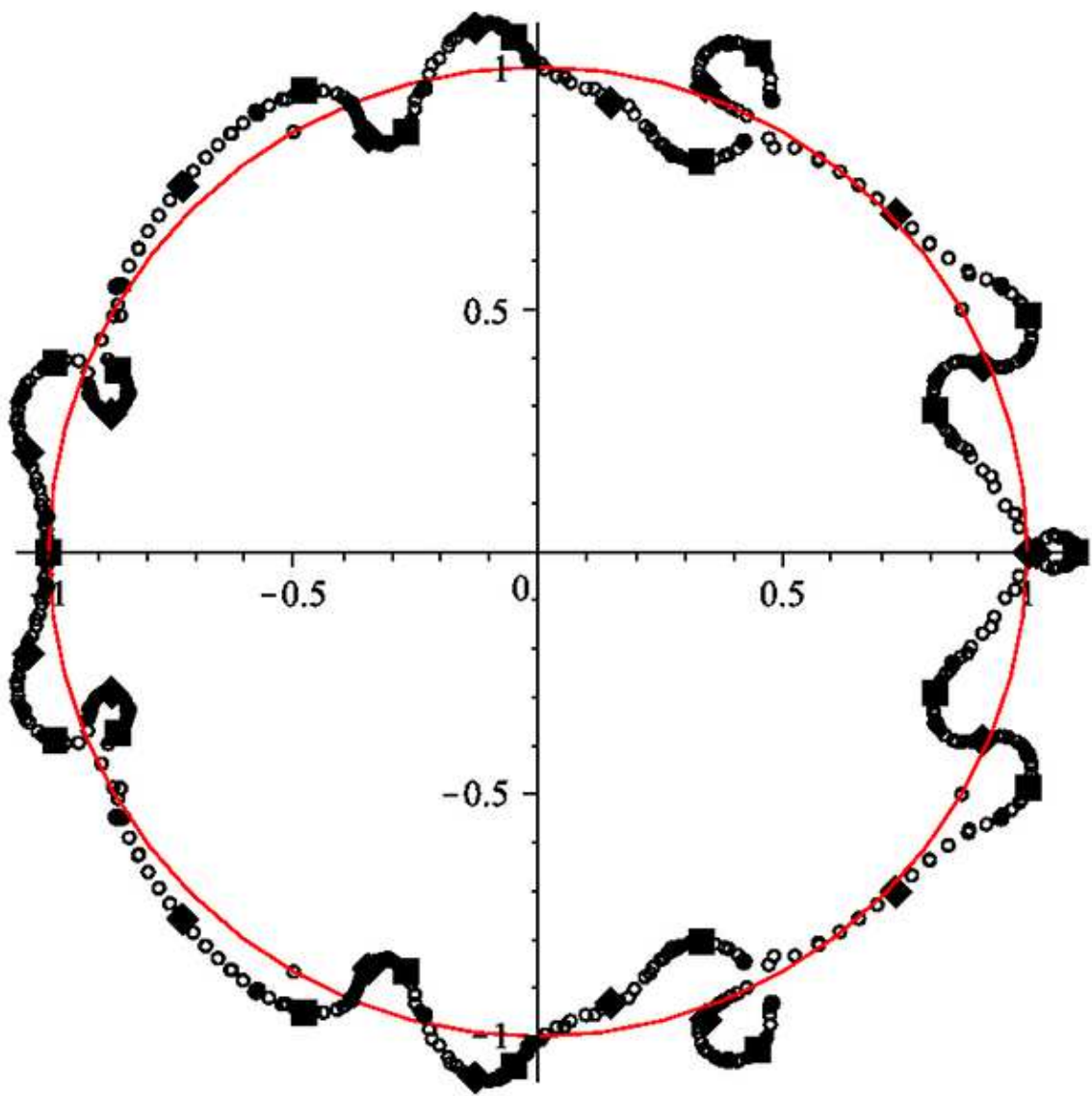}
\vspace*{-1.5in}
\caption{$v(5/4,black\ dot)=0$,$v(0,black\ square)=0$ and $v(4,black\ diamond)=0$. Roots of 
$<T_{10,[-2,-2,-2]}>=0$ and $<T_{20,[-2,-2,-2]}>=0$ are open circles} \label{fig: crv_2-2-2}
\end{figure}

\begin{figure}[h]
\hspace*{-.7in} \includegraphics{}
\vspace*{-1.5in}
\caption{$v(5/4,black\ dot)=0$,$v(0,black\ square)=0$ and $v(4,black\ diamond)=0$. Roots of 
$<T_{10,[-3,-2,-2]}>=0$ and $<T_{20,[-3,-2,-2]}>=0$are open circles} \label{fig: crv_3-2-2}
\end{figure}

\end{document}